\documentclass[11pt]{amsart}
\usepackage{geometry}
\usepackage{graphicx}
\usepackage{amssymb}
\usepackage{amsmath, mathrsfs, stmaryrd}
\usepackage{color}
\newtheorem{theorem}{Theorem}
\newtheorem{definition}[theorem]{Definition}
\newtheorem{lemma}[theorem]{Lemma}
\newtheorem{corollary}[theorem]{Corollary}
\newtheorem{proposition}[theorem]{Proposition}
\newtheorem{remark}[theorem]{Remark}

\newcommand{\R}{\mathbb R}
\newcommand{\h}{\mathbb H}
\numberwithin{equation}{section}

\numberwithin{theorem}{section}
\begin{document}
\thanks{Part of this work was carried out while all three authors were visiting Department of Mathematics of National Taiwan University and Taida Institute for Mathematical Sciences in Taipei, Taiwan. P.-N. Chen is supported by NSF grant DMS-1308164, M.-T. Wang is supported by NSF grants DMS-1105483 and DMS-1405152 and S.-T. Yau is supported by NSF
grant  PHY-0714648.} 
\title[Conserved quantities]{Conserved quantities on  asymptotically hyperbolic initial data sets}
\author{Po-Ning Chen, Mu-Tao Wang and Shing-Tung Yau}
\begin{abstract}
In this article, we consider the limit of quasi-local conserved quantities \cite{Wang-Yau2, Chen-Wang-Yau3} at the infinity of an asymptotically hyperbolic initial data set in general relativity.
These give notions of total energy-momentum, angular momentum, and center of mass. Our assumption on the asymptotics is less stringent than any previous ones to validate a Bondi-type mass loss formula. The Lorentz group acts on the asymptotic infinity through the exchange of foliations by coordinate spheres. For foliations aligning with the total energy-momentum vector, we prove that the limits of  quasi-local center of mass and angular momentum are finite, and evaluate the limits in terms of the expansion coefficients of the metric and the second fundamental form.
 \end{abstract}
\maketitle

\section{Introduction}
The notion of energy, linear momentum, angular momentum, center of mass and their conservation laws are of fundamental importance for any physical theory. However, there have been great difficulties in understanding these notions for gravitation since Einstein's time, as there there is no well-defined concept of energy density due to the equivalence principle. It is nevertheless possible to use asymptotic symmetries to define these notions for an isolated system. At spatial infinity, the Arnowitt-Deser-Misner (ADM) energy-momentum \cite{Arnowitt-Deser-Misner} is well-understood. ADM energy-momentum is fundamental in general relativity and has been proven to be natural and to satisfy the important positivity property by the work of Schoen-Yau \cite{Schoen-Yau} and Witten \cite{Witten}. It is also shown that the ADM energy-momentum satisfies important invariant properties in the work of Bartnik \cite{Bartnik} and Chru\'{s}ciel \cite{Chrusciel}.  There are several existing definitions of total angular momentum and total center of mass such as the Arnowitt-Deser-Misner (ADM) angular momentum \cite{Arnowitt-Deser-Misner,Ashtekar-Hansen,Regge-Teitelboim} and the center of mass proposed by Huisken-Yau, Regge-Teitelboim, Beig-\'OMurchadha, Christodoulou and Schoen \cite{Huisken-Yau, Regge-Teitelboim, Beig-Omurchadha, Christodoulou, Huang}. In \cite{Chen-Wang-Yau3}, the authors proposed a new definition of quasi-local angular momentum and 
 center of mass and used their limits to define new total conserved quantities for asymptotically flat initial data sets. The new definitions are finite for asymptotically flat initial data of order 1, and satisfy important dynamical formulas for solutions of the Einstein equation. See Theorem 7.4 and Theorem 9.6 of \cite{Chen-Wang-Yau3}.

When the system is viewed from null infinity, the situation is more complicated. The notion of mass at null infinity is first studied by Bondi   \cite{Bondi-Burg-Metzner}  and Trautman \cite{Trautman}. While the ADM energy-momentum at spatial infinity is conserved for solutions of the Einstein equation, the Bondi energy at null infinity is decreasing, see \cite{Bondi-Burg-Metzner}, \cite{Sachs} and \cite{Trautman}. A similar mass loss formula at null infinity is derived in \cite{sta} by Christodoulou and Klainerman, as a consequence of their proof of global stability of the Minkowski space. Their mass loss formula plays a key role in the study of nonlinear memory effect in gravitational radiation \cite{Christodoulou2}.  In \cite{Rizzi}, Rizzi proposed a new definition of total angular momentum at null infinity by assuming the existence of a special foliation.  He also studied the change of this total angular momentum  by choosing a particular laspe function.

Schoen and Yau modified their proof for the positivity of ADM energy to prove the positivity of Bondi mass \cite{Schoen-Yau2}. Their main strategy is to study a spacelike hypersurface asymptotic to the null cone at infinity. Both the induced metric and the second fundamental form are asymptotic to the metric of the hyperbolic 3-space. On such an asymptotically hyperbolic hypersurface, they solve Jang's equation to obtain an asymptotically flat manifold whose ADM energy is a positive multiple of the Bondi mass of the null cone. 

Motivated by the proof of the positivity of Bondi mass and the study of asymptotically Anti-de-Sitter space, it is natural to find a suitable notion of mass and other conserved quantities for general asymptotically hyperbolic manifolds. In \cite{Wang}, X. Wang proved the positivity of mass for umbilical and conformally compact asymptotically hyperbolic manifolds satisfying the dominant energy condition. There are many works along this direction,  see for example \cite{Andersson-Cai-Galloway}, \cite{Chrusciel-Herzlich}, \cite{Chrusciel-Jezuerski}, \cite{Sakovich} and \cite{Zhang}. In \cite{Zhang2}, Zhang proved a positivity and rigidity theorem for the mass of asymptotically hyperbolic manifolds without the umbilical assumption. In the same article, a new definition of total angular momentum was also proposed. 

In this article, we study the total energy, linear momentum, angular momentum and center of mass at null infinity of asymptotically flat spacetime using the quasi-local energy of \cite{Wang-Yau2} and the quasi-local angular momentum and center of mass of \cite{Chen-Wang-Yau3}. The null infinity is modeled on 3-manifolds asymptotic to a hyperboloid in the Minkowski spacetime where the induced metric is isometric to the hyperbolic 3-space and the second fundamental form is the same as the induced metric. Let $\mathbb{H}^3$ denote the hyperbolic 3-space with metric $\frac{1}{r^2+1} dr^2+r^2 \tilde{\sigma}_{ab} du^a du^b$ where $\tilde \sigma_{ab}$ is the standard metric on the unit 2-sphere $S^2$. Specifically, here are the decay conditions:
\begin{definition}\label{a_h_coordinates}
A triple $(M,g,k)$ of a complete 3-manifold $M$, a Riemannian metric $g$ on $M$, and a symmetric 2-tensor (the second fundamental form) $k$ is said to be an  asymptotically hyperbolic initial data set if 
there exists a compact subset $K$ of $M$ such that $M\backslash K$ is diffeomorphic to a finite union  of ends $\cup_i  \mathbb{H}^3\backslash B_i$ where  each $B_i$ is a geodesic ball in $\mathbb{H}^3$. Under the diffeomorphism, we have
\[g= g_{rr} dr^2+ 2 g_{ra}dr du^a + g_{ab} du^adu^b \text{ and } k=g+p,\]
where
\[\begin{split}
g_{rr} =& \frac{1}{r^2} - \frac{1}{r^4}+ \frac{g_{rr}^{(-5)}}{r^5} + \frac{g_{rr}^{(-6)}}{r^6} +O(r^{-7}), \qquad  g_{ra}= \frac{g_{ra}^{(-3)}}{r^3}+O(r^{-4}), \\
g_{ab} =& r^2 \tilde \sigma_{ab} + g_{ab}^{(0)}+ \frac{g_{ab}^{(-1)}}{r}+ \frac{g_{ab}^{(-2)}}{r^2} +O(r^{-3}),
\end{split}\]
and
\[\begin{split}
p_{rr} =& \frac{p_{rr}^{(-4)}}{r^4}+O(r^{-5}) \qquad  p_{ra}=  \frac{p_{ra}^{(-3)}}{r^3}+O(r^{-4}),  \\
p_{ab} = & p^{(0)}_{ab}+  \frac{p^{(-1)}_{ab}}{r} + \frac{p_{ab}^{(-2)}}{r^2} +O(r^{-3}).
\end{split}\]
Here $\tilde \sigma_{ab}$ is the standard round metric on the unit 2-sphere $S^2$. 
$g_{rr}^{(-5)}, g_{rr}^{(-6)}, p_{rr}^{(-4)}$ are considered to be functions on $S^2$ that do not depend on $r$, $g_{ra}^{(-3)}, p_{ra}^{(-3)}$ are considered to be one-forms on $S^2$ that do not depend on $r$, and $g_{ab}^{(0)}, g_{ab}^{(-1)}, g_{ab}^{(-2)}, p_{ab}^{(0)}, p_{ab}^{(-1)}, p_{ab}^{(-2)}$
are considered to be symmetric two-tensors on $S^2$ that do not depend on $r$. Furthermore, we assume that  $p^{(0)}_{ab}$ and $g^{(0)}_{ab}$ are traceless with respect to $\tilde \sigma_{ab}$.
\end{definition}
We assume that the triple $(M,g,k)$ satisfies the following dominant energy condition:
\begin{definition}
$(M,g,k)$ satisfies the dominant energy condition if 
\[  
\begin{split}
\mu= & \frac{1}{2}(R(g)+ (tr_g k)^2- |k|_g^2) \\
{\mathfrak J}_i = &D^j(k_{ij} - (tr_g k )g_{ij})
\end{split}
\] 
satisfies
\[ \mu \ge |{\mathfrak J}|.  \]
Here $R(g)$ is the scalar curvature of the metric $g$ and $D^j$ is the covariant derivative with respect to the metric $g$.
\end{definition}

\begin{definition} \label{aspect}
The mass aspect function $m$ of an asymptotically hyperbolic initial data set is defined to be
\begin{equation}\label{mass_aspect} m= \frac{3}{2}tr_{S^2} g^{(-1)}_{ab}+  tr_{S^2}p^{(-1)}_{ab} +  g^{(-5)}_{rr}.   \end{equation}
\end{definition}
The energy-momentum of an asymptotically hyperbolic initial data set is defined as follows:
\begin{definition} \label{definition}
Let $(M,g,k)$ be an  asymptotically hyperbolic initial data set. The energy-momentum of $(M,g,k)$  is the four vector $(E_{AH},P_{AH}^i), i=1, 2, 3$ where
\[  
\begin{split}
E_{AH}(M,g,k)=& \frac{1}{8 \pi} \int_{S^2} m  \, dS^2\\
P_{AH}^i(M,g,k)= & \frac{1}{8 \pi} \int_{S^2} \tilde X^i m \, dS^2, i=1, 2, 3
\end{split}
\]
where $\tilde X^i, i=1, 2, 3$ are the three standard coordinate functions on $S^2$. 
\end{definition}
\begin{remark}
There are two types of asymptotically hyperbolic initial data sets. One is modeled on the hyperbola in the  Minkowski spacetime, and thus $g=k$ (umbilical) is the same as the metric on $\mathbb{H}^3$. The other is modeled on a static slice of the Anti-de-Sitter spacetime. Most studies only consider
a Riemannian manifold that is asymptotically to a hyperbolic space and it is implicitly assumed that the second fundamental form is either zero or the same as the induced metric. However, in studying the mass of an initial data set, it is important to take into account of the second fundamental form. For example, there exists an asymptotically umbilical spacelike hypersurface in the Schwarzschild spacetime with exactly hyperbolic induced metric and an asymptotically totally geodesic spacelike
hypersurface in the Anti-de-Sitter Schwarzschild spacetime with exactly hyperbolic induced metric. In both cases, the mass can only be read off from the 
second fundamental form. 
\end{remark}
\begin{remark} In \cite{Wang}, X. Wang defined energy-momentum for asymptotically hyperbolic Riemannian manifolds with $g_{ra}=0$ and $g_{rr}= \frac{1}{r^2+1}$. It is easy to see that his definition agrees with ours in this simplest case. 
\end{remark}

In \cite{Chen-Wang-Yau1} and \cite{Wang-Yau3}, the authors proved that the limit of the Wang-Yau quasi-local energy of the coordinates spheres recovers the ADM energy momentum vector at spatial infinity and the Bondi-Sachs energy momentum vector at null infinity. In setion 3, we prove

 \vskip 10pt
\noindent {\bf Theorem A} (Theorem \ref{limit_vector})
{\it
Given an asymptotically hyperbolic initial data set $(M,g,k)$, the limit of the quasi-local energy of $\Sigma_r$ with reference embedding $X_r$ into $\R^3$ is a linear function dual to the energy-momentum of $(M,g,k)$.
}
\vskip 10pt
In \cite{Kwong-Tam}, Kwong and Tam evaluated the limit of a quasi-local energy-momentum defined by Wang and Yau in \cite{Wang-Yau1} at the infinity of an asymptotically hyperbolic manifold. The quasi-local energy-momentum  of \cite{Wang-Yau1} again only uses the hyperbolic space as reference and does not include the effect of the second fundamental form of the initial data.

 In Section 4, we prove the following theorem regarding the causality of the energy-momentum vector of an asymptotically hyperbolic initial data set.
 \vskip 10pt
\noindent {\bf Theorem B} (Theorem \ref{thm_positivity})
{\it
Suppose  $(M,g,k)$ is an asymptotically hyperbolic initial data set satisfying the dominant energy condition, then its  energy-momentum 4-vector is future directed non-spacelike.  Namely,
\[   E_{AH}  \ge \sqrt{\sum_i (P^i_{AH})^2 } . \]
}
\vskip 10pt

To show that the energy-momentum 4-vector is non-spacelike, we apply a Lorentz boost of the Minkowski spacetime to the coordinate spheres, obtain
a new foliation, and compute the limit of quasi-local energy with respect to the new foliation. For any future directed unit timelike vector $(a^0,a^i)$ in $\R^{3,1}$,  we show that the limit of the Liu-Yau quasi-local energy of corresponding boost  is $a^0 E_{AH}- \sum_i a^i P_{AH}^i$. The positivity of the Liu-Yau quasi-local energy \cite{Liu-Yau} implies the 
energy momentum 4-vector is non-spacelike.

In addition to the positive mass theorem, we derive an energy loss formula for asymptotically hyperbolic initial data sets. Our  energy loss formula is similar to the Bondi mass loss formula derived in \cite{sta}. In particular, the source of the energy loss is $p_{ab}^{(0)}+ g_{ab}^{(0)}$, which plays the role of the leading order term of $\underline{\hat  \chi}$ in the mass loss formula derived in \cite{sta}. Theorem \ref{limit_vector} enables us to use the limit of the Liu-Yau mass for the total energy of asymptotically hyperbolic initial data set and to compute the variation of the Liu-Yau mass along the Einstein equation. This approach is similar to that of \cite{sta}, where the total energy of a null cone is defined to be the limit of the Hawking mass and the mass loss formula is derived by studying the variation of the Hawking mass. The energy loss formula we derive is the following:
\vskip 10pt
\noindent {\bf Theorem C} (Theorem \ref{thm_loss})
{\it
Let $(M,g,k)$ be an asymptotically hyperbolic initial data set satisfying the vacuum constraint equation. Let $(M,g(t),k(t))$ be the solution to the vacuum Einstein equation with $g(0)=g$ and $k(0)=k$, and with lapse $\sqrt{r^2+1}$ and shift vector $-r e_3$ where $e_3$ is the unit normal vector of the coordinate spheres on $(M, g(t), k(t))$. Let $\Sigma_{r,t}$ be the coordinate spheres on $(M,g(t),k(t))$ and 
\[ M(t) = \lim_{r\to \infty} m_{LY}(\Sigma_{r,t}) \] be the limit.
Along the vacuum Einstein equation, we have
\[ \partial_t  M(t) = \frac{-1}{8 \pi} \int_{S^2} |p_{ab}^{(0)}+ g_{ab}^{(0)}|^2  dS^2.  \]
}
\vskip 10pt
\begin{remark}\label{gauge_condition}
The lapse function and shift vector considered here are natural for asymptotically hyperbolic initial data sets. 
The Minkowski metric can be written as 
\[  g= -dt^2- \frac{2r}{\sqrt{r^2+1}} dr dt + \frac{dr^2}{r^2+1} + r^2 (d\theta^2+\sin^2\theta d\phi^2).\]
Each level set of $t$ is a standard hyperboloid. Let $\Sigma_r$ be a level set of $r$ on a hyperboloid, $e_3$ be its unit normal vector in the hyperboloid, and $e_4$ be the unit normal of the hyperboloid. Thus
\[
\begin{split}
e_3 = &   \sqrt{r^2+1}  \frac{\partial}{\partial r} \\
e_4 = &  \frac{1}{ \sqrt{r^2+1}}\frac{\partial}{\partial t}  + r \frac{\partial}{\partial r}  
\end{split}
\]
and the Killing vector field $\frac{\partial}{\partial t}$ can be written as 
\[ \frac{\partial}{\partial t} = \sqrt{r^2+1} e_4 -r e_3 . \]
Hence, the lapse function is $\sqrt{r^2+1}$ and the shift vector is $-r e_3$. 
\end{remark}
For spacetimes with matter fields, we prove a similar mass loss formula with an additional term from the matter field assuming some natural decay conditions on the  null component of the stress-energy density of the matter field (see Definition \ref{null_condition} and Theorem \ref{thm_loss2}). In particular, for solutions of the Einstein-Maxwell equation, our result resembles the mass loss formula derived in \cite{Zipser} by Zipser. 

While the ADM energy and linear momentum are well-understood for asymptotically flat initial data sets, defining total angular momentum and center of mass is much more complicated and there were several delicate issues concerning the finiteness, well-definedness, and physical validity. In \cite{Chen-Wang-Yau3}, the authors defined new  total conserved quantities for asymptotically flat initial data sets using the limit of quasi-local conserved quantities and proved the finiteness of the new total  angular momentum and center of mass for asymptotically flat initial data sets of order 1. In this article, we compute the limit of quasi-local angular momentum and center of mass at infinity of asymptotically hyperbolic initial data sets. Unlike the asymptotically flat case, we show that, for an asymptotically hyperbolic initial data set, the limit is finite only for the foliation with vanishing linear momentum. Assuming the energy-momentum vector is timelike for a given foliation, we show that there exists another foliation with vanishing linear momentum.  For the new foliation, the total center of mass $C^i$ and total angular momentum $J^i$ (See Definition \ref{total_conserved}) can be explicitly computed in terms of the expansion coefficients of the metric and the second fundamental form.

 \vskip 10pt
\noindent {\bf Theorem D} (Theorem \ref{thm_conserved})
{\it
Given an asymptotically hyperbolic initial data set $(M,g,k)$, for the foliation with vanishing linear momentum, the total center of mass $C^i$ and total angular momentum $J^i$ of $(M,g,k)$ are 
\begin{equation*}
\begin{split}
C^i =& \frac{1}{8 \pi} \int_{S^2} \tilde X^i (2 tr_{S^2} g_{ab}^{(-2)} +g_{rr}^{(-6)}+ \tilde \nabla^a g_{ra}^{(-3)} +  tr_{S^2} p_{ab}^{(-2)}) dS^2 \\
J^i =& \frac{1}{8 \pi} \int_{S^2} \tilde X^i  \tilde\epsilon^{ab}  \tilde \nabla_b (g_{ra}^{(-3)}+p_{ra}^{(-3)}) dS^2 .
\end{split}
\end{equation*}
}
%%%%%%%%%%%%%%%%%%%%%%%%%%%%%%%%%%%%%%%%%%%%%%%%%%%%%%%%%%%%%%%%%%%%%

\section{Quasi-local energy-momentum, angular momentum and center of mass}

We recall the definition of quasi-local energy-momentum defined in  \cite{Wang-Yau2} and quasi-local angular momentum and center of mass defined in \cite{Chen-Wang-Yau3}. Let $\Sigma$ be a closed embedded spacelike 2-surface in a spacetime with spacelike mean curvature vector $H$. The data used in the definition of qausi-local mass is the triple $(\sigma,|H|,\alpha_H)$ where $\sigma$ is the induced metric, $|H|$ is the norm of the mean curvature vector and $\alpha_H$ is the connection one form of the normal bundle with respect to the mean curvature vector
\[ \alpha_H(\cdot )=\langle \nabla^N_{(\cdot)}   \frac{J}{|H|}, \frac{H}{|H|}  \rangle  \]
where $J$ is the reflection of $H$ through the incoming  light cone in the normal bundle.

Given an isometric embedding $X:\Sigma\rightarrow \R^{3,1}$ and a constant future timelike unit vector $T_0\in \R^{3,1}$, we consider the projected embedding $\hat{X}$ into the orthogonal complement of $T_0$. We denote the mean curvature of the image by $\widehat{H}$. 

In terms of $\tau= - X \cdot T_0$, the quasi-local energy with respect to the pair $(X, T_0)$ is
\[\begin{split}&E(\Sigma, \tau)=\frac{1}{8 \pi}\int_{\widehat{\Sigma}} \hat{H} d{\widehat{\Sigma}}-\frac{1}{8 \pi }\int_\Sigma \left[\sqrt{1+|\nabla\tau|^2}\cosh\theta|{H}|-\nabla\tau\cdot \nabla \theta -\alpha_H ( \nabla \tau) \right]d\Sigma,\end{split}\] where $\theta=\sinh^{-1}(\frac{-\Delta\tau}{|H|\sqrt{1+|\nabla\tau|^2}})$ and $\nabla$ and $\Delta$ are the gradient and Laplace operator, respectively, with respect to $\sigma$. In particular, $E(\Sigma,0)$ is the same as the Liu-Yau quasi-local mass, $m_{LY}(\Sigma)$.

Assuming the spacetime satisfying the dominant energy condition, the quasi-local energy defined above is non-negative for any admissible pair of  $(X, T_0)$. The quasi-local mass of the surface $\Sigma$ is defined to be the infimum of the quasi-local energy with respect to all admissible pairs. 

Given an isometric embedding $X$ of $\Sigma$ into $\R^{3,1}$, let $H_0$ and $\alpha_{H_0}$  be the mean curvature vector and connection one form of $X(\Sigma)$ in $\R^{3,1}$. We introduce $\rho $ and a $j_a$ as follows: 
   \begin{align} \label{rho}\rho &= \frac{\sqrt{|H_0|^2 +\frac{(\Delta \tau)^2}{1+ |\nabla \tau|^2}} - \sqrt{|H|^2 +\frac{(\Delta \tau)^2}{1+ |\nabla \tau|^2}} }{ \sqrt{1+ |\nabla \tau|^2}}
\\ \label{j_a}
j_a & =\rho {\nabla_a \tau }- \nabla_a [ \sinh^{-1} (\frac{\rho\Delta \tau }{|H_0||H|})]-(\alpha_{H_0})_a + (\alpha_{H})_a. \end{align} 
In terms of these, the quasi-local energy is $\frac{1}{8\pi}\int_\Sigma (\rho+j_a\nabla^a\tau)$.
 
A critical point of the quasi-local energy satisfies the optimal isometric embedding equation. A pair of an embedding $X:\Sigma\hookrightarrow \mathbb{R}^{3,1}$ and an observer $T_0$ satisfies the optimal isometric embedding equation for $(\sigma_{ab}, |H|, (\alpha_H)_a)$ if $X$ is an isometric embedding and 
 \begin{equation}\label{optimal}div_{\sigma} j=0.\end{equation}

We define the quasi-local angular momentum and center of mass for each optimal isometric embedding. Let $(x^0,x^1,x^2,x^3)$ be the standard coordinate of $\R^{3,1}$. We recall that $K$ is a rotational Killing field if $K$ is the image of $x^i \frac{\partial}{\partial x^j}-x^j \frac{\partial}{\partial x^i} $ under a Lorentz transformation. Similarly, $K$ is a boost Killing field  $K$ is the image of $x^0 \frac{\partial}{\partial x^i}+x^i \frac{\partial}{\partial x^0} $ under a Lorentz transformation. 

\begin{definition} 
The quasi-local conserved quantity of $\Sigma$ with respect to an optimal isometric embedding $(X, T_0)$ and a Killing field $K$ is 
\begin{equation}\label{qlcq2}\begin{split}&E(\Sigma, X, T_0, K)=\frac{(-1)}{8\pi} \int_\Sigma
\left[ \langle K, T_0\rangle \rho+j(K^\top) \right]d\Sigma,\end{split}\end{equation} 
where $K^\top$ is the tangential part of $K$ to $X(\Sigma)$. 

Suppose $T_0=A(\frac{\partial}{\partial x^0})$ for a Lorentz transformation $A$, then the quasi-local conserved quantities corresponding to $A(x^i\frac{\partial}{\partial x^j}-x^j\frac{\partial}{\partial x^i}), i<j$ are called the quasi-local angular momentum integral with respect
to $T_0$ and the quasi-local conserved quantities corresponding to $A(x^i\frac{\partial}{\partial x^0}+x^0 \frac{\partial}{\partial x^i}), i=1, 2, 3$ are called the quasi-local center of mass integral with respect to $T_0$. 
\end{definition}

%%%%%%%%%%%%%%%%%%%%%%%%%%%%%%%%%%%%%%%%%%%%%%%%%%%%%%%%%%%%%%%%%%%%%

\section{Limit of quasi-local energy of coordinate spheres.} 
We first prove the following lemma about the geometry of coordinate spheres in an asymptotically hyperbolic initial data set. Given an asymptotically hyperbolic initial data set $(M,g,k)$ in a spacetime $N$, such that $g$ and $k$ are the induced metric and second fundamental form of $M$,
respectively.
\begin{lemma}\label{data_expansion}
Let $\Sigma_r$ be coordinate spheres in an asymptotically hyperbolic space. Let $\sigma$, $H$ be the mean curvature vector of $\Sigma_r$ and  $\alpha_{H}$ be the induced metric, mean curvature vector and connection one-form of the normal bundle in mean curvature gauge of $\Sigma_r$ in the spacetime $N$. We have
\begin{equation} \label{eq:expansion_coordinate}
\begin{split}
\sigma_{ab} =& r^2 \tilde \sigma_{ab}+g^{(0)}_{ab} + \frac{ g^{(-1)}_{ab}}{r} +O(r^{-2}), \\ 
|H| = &  \frac{2}{r} - \frac{m   }{r^2} +O(r^{-3}),  \\
(\alpha_{H})_a = & \frac{ \partial_a ( m) }{2r}+O(r^{-2}),
\end{split}
\end{equation}
where $m$ is the mass aspect function defined in Definition \ref{aspect}.

\end{lemma}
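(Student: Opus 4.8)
The plan is to compute each of the three quantities in \eqref{eq:expansion_coordinate} by a direct asymptotic expansion, feeding in the decay hypotheses of Definition \ref{a_h_coordinates} and carefully tracking which expansion coefficients of $g$ and $p$ survive to the stated order. For the induced metric, the statement is almost immediate: $\Sigma_r$ is a level set of the coordinate function $r$, so $\sigma_{ab}$ is just the restriction of $g_{ab}$, and the expansion $g_{ab} = r^2\tilde\sigma_{ab} + g^{(0)}_{ab} + g^{(-1)}_{ab}/r + O(r^{-2})$ reads off directly. The one subtlety is that the coordinate vector fields $\partial_{u^a}$ are not tangent to $\Sigma_r$ only if we allowed the slices to be non-coordinate; here they are, so no correction is needed.

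The substantive part is the expansion of the mean curvature vector and the connection one-form. First I would set up the normal bundle of $\Sigma_r$ in the spacetime $N$: it is spanned by the outward spacelike normal $e_3$ of $\Sigma_r$ inside $M$ and the future timelike normal $e_4$ of $M$ in $N$. The mean curvature vector decomposes as $H = -\langle \vec{H}, e_3\rangle e_3 + \langle \vec{H}, e_4\rangle e_4$, where $\langle \vec H, e_3\rangle$ is (minus) the mean curvature $H_3$ of $\Sigma_r$ in $(M,g)$ and $\langle \vec H, e_4\rangle$ is $\mathrm{tr}_{\sigma} k$ restricted to $\Sigma_r$. So I need asymptotic expansions of both $H_3$ and $\mathrm{tr}_\sigma k$. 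For $H_3$: from $g_{rr} = r^{-2} - r^{-4} + g^{(-5)}_{rr} r^{-5} + \cdots$ one gets $|e_3| = g_{rr}^{-1/2} = r\sqrt{1 - r^{-2} + \cdots}^{\,-1} = r + \tfrac12 r^{-1} + \cdots$, and the mean curvature is $H_3 = \tfrac{1}{2|e_3|}\, g_{rr}^{?}\,\partial_r\log\det\sigma$ plus cross-term contributions from $g_{ra}$; expanding $\partial_r \log\det(r^2\tilde\sigma_{ab}+g^{(0)}_{ab}+\cdots)$ gives $\tfrac{4}{r} - \tfrac{2\,\mathrm{tr}_{S^2}g^{(-1)}_{ab}}{r^3}\cdot\tfrac1{r^0}+\cdots$ — here the tracelessness of $g^{(0)}_{ab}$ is exactly what kills a spurious $r^{-2}$ term, so that $H_3 = \tfrac{2}{r} - (\tfrac32\mathrm{tr}_{S^2}g^{(-1)}_{ab} + g^{(-5)}_{rr})r^{-2} + O(r^{-3})$. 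For $\mathrm{tr}_\sigma k = \mathrm{tr}_\sigma g + \mathrm{tr}_\sigma p = 2 + \sigma^{ab}p_{ab}$, and $\sigma^{ab}p_{ab} = r^{-2}\tilde\sigma^{ab}(p^{(0)}_{ab} + p^{(-1)}_{ab}/r + \cdots) = r^{-2}\mathrm{tr}_{S^2}p^{(-1)}_{ab}\cdot r^{-1}\cdot r^0 + \cdots$, again using that $p^{(0)}_{ab}$ is traceless, giving $\mathrm{tr}_\sigma k = 2 + \mathrm{tr}_{S^2}p^{(-1)}_{ab}\,r^{-3} + O(r^{-4})$ — wait, more care is needed on the power counting, but the upshot is that the $r^{-2}$-coefficient of $\langle\vec H,e_4\rangle$ is $\mathrm{tr}_{S^2}p^{(-1)}_{ab}$. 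Then $|H|^2 = \langle\vec H,e_3\rangle^2 - \langle\vec H,e_4\rangle^2$, and since $\langle\vec H,e_4\rangle = 2/r + O(r^{-3})$ while $\langle\vec H,e_3\rangle = 2/r - m/r^2+\cdots$ with $m = \tfrac32\mathrm{tr}_{S^2}g^{(-1)}_{ab} + \mathrm{tr}_{S^2}p^{(-1)}_{ab} + g^{(-5)}_{rr}$, one finds $|H| = 2/r - m/r^2 + O(r^{-3})$. (I should double-check that the $\langle \vec H, e_4\rangle^2$ contribution to $|H|^2$ is $O(r^{-4})$ lower order so that the $r^{-2}$ coefficient of $|H|$ comes entirely from $\langle\vec H,e_3\rangle$; this is where assembling $m$ correctly is delicate.)

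For the connection one-form $\alpha_H$ in mean curvature gauge, I would use $\alpha_H(\cdot) = \langle \nabla^N_{(\cdot)} \tfrac{J}{|H|}, \tfrac{H}{|H|}\rangle$; writing $H/|H|$ and $J/|H|$ in terms of $e_3, e_4$ and the hyperbolic angle between $\vec H$ and $e_3$ (which is $O(r)$ small since $\langle \vec H, e_4\rangle/\langle\vec H,e_3\rangle \to 1$ but their difference is lower order — actually the boost angle is small of order $\sim$ the difference, so one must be careful), the leading contribution to $\alpha_H$ comes from the $M$-normal connection piece $\langle\nabla^N_{e_a}e_4,e_3\rangle = k_{a3}$-type terms together with the gradient of the boost angle, and the net leading term is $\tfrac12 r^{-1}\partial_a m$ — one checks that the contributions involving $g_{ra}^{(-3)}$ and $p_{ra}^{(-3)}$ are $O(r^{-2})$ and drop out. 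The main obstacle, as in all such asymptotic-expansion lemmas, is bookkeeping: getting every power of $r$ right through the product and trace operations, making sure the tracelessness assumptions on $g^{(0)}_{ab}$ and $p^{(0)}_{ab}$ are invoked exactly where needed to cancel would-be $r^{-2}$ terms in $|H|$ and $H_3$, and correctly identifying the boost angle between the mean curvature gauge frame and the natural $(e_3,e_4)$ frame. I do not expect any conceptual difficulty beyond this careful computation, and the definition \eqref{mass_aspect} of $m$ is precisely engineered so that the three expansions close up with $m$ appearing as stated.
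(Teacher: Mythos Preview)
Your overall strategy matches the paper's exactly: compute the two normal components $\langle H,e_3\rangle$ and $\langle H,e_4\rangle$, form $|H|^2=\langle H,e_3\rangle^2-\langle H,e_4\rangle^2$, and obtain $\alpha_H$ via $(\alpha_H)_a=-k(e_3,\partial_a)+\nabla_a\theta$ with $\sinh\theta=-\langle H,e_4\rangle/|H|$. But your intermediate expansions are off by a full power of $r$, and this is not just bookkeeping---it reflects a misreading of the asymptotically hyperbolic geometry.

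A large geodesic sphere in $\mathbb H^3$ has mean curvature $\approx 2$, not $2/r$. The paper finds
\[
-\langle H,e_3\rangle=2+\tfrac{1}{r^2}-\tfrac{g^{(-5)}_{rr}+\frac{3}{2}\,\mathrm{tr}_{S^2}g^{(-1)}_{ab}}{r^3}+O(r^{-4}),\qquad
-\langle H,e_4\rangle=\mathrm{tr}_\sigma k=2+\tfrac{\mathrm{tr}_{S^2}p^{(-1)}_{ab}}{r^3}+O(r^{-4}).
\]
(Your own calculation of $\mathrm{tr}_\sigma k$ actually gave $2+O(r^{-3})$, which you then rewrote as $2/r+O(r^{-3})$.) Hence $|H|\approx 2/r$ arises from a \emph{cancellation} of the $O(1)$ leading terms, and the $r^{-3}$ coefficients of \emph{both} normal components feed into the $r^{-2}$ coefficient of $|H|$: the $\mathrm{tr}_{S^2}p^{(-1)}_{ab}$ piece of $m$ enters through $\langle H,e_4\rangle$, not through $\langle H,e_3\rangle$ as you wrote. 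Correspondingly the boost angle is not small but large: $\sinh\theta\approx 2/(2/r)=r$, so $e^\theta=2r+m+O(r^{-1})$, and it is precisely this that yields $\nabla_a\theta=\tfrac{\partial_a m}{2r}+O(r^{-2})$. Once these orders are corrected your plan goes through verbatim and coincides with the paper's proof.
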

\begin{proof} 
The expansion for $\sigma_{ab} $ follows directly from the definition of asymptotically hyperbolic initial data set. The inverse metric is
\[  \sigma^{ab} =\frac{ \tilde \sigma^{ab}}{r^2} - \frac{(g^{(0)})^{ab}}{r^4 }- \frac{(g^{(-1)})^{ab}}{r^5}+ O(r^{-6}).   \]

Let $e_3$ be the unit normal vector of $\Sigma_r$ in $M$ and $e_4$ be the unit normal vector of $M$ in $N$. We compute 
$\langle H, e_3 \rangle$ and $\langle H, e_4 \rangle$. $ e_3$ is given by
\[   e_3 =  \frac{1}{\sqrt{ g_{rr} -|V|_{\sigma}^2 }} (\frac{\partial}{\partial r}  + V^a\frac{\partial}{\partial u^a}  )   \] 
where  $V_a =  -g_{ra}$, and the second fundamental form of $\Sigma_r$ in the direction of $e_3$ is 

\begin{equation}\label{second_ff}
\begin{split}
        \langle \nabla_{\frac{\partial}{\partial u^a}  }   e_3 , \frac{\partial}{\partial u^b}  \rangle = & \sqrt{r^2+1 - \frac{ g^{(-5)}_{rr}}{r}}  \langle \nabla_{\frac{\partial}{\partial u^a}  }  \frac{\partial}{\partial r}  , \frac{\partial}{\partial u^b}  \rangle  + O(r^{-2}) \\
=&  \sqrt{r^2+1 - \frac{ g^{(-5)}_{rr}}{r}} (r \tilde \sigma_{ab} - \frac{g^{(-1)}_{ab}}{2 r^2})   + O(r^{-2})   .
\end{split}
\end{equation}
Hence, 
\[\begin{split}
       - \langle H, e_3 \rangle  
= &  \sigma^{ab}   \sqrt{r^2+1 - \frac{ g^{(-5)}_{rr}}{r}} (r \tilde \sigma_{ab} - \frac{g^{(-1)}_{ab}}{2 r^2})  + O(r^{-4})  \\
= &  r(1+ \frac{1}{2r^2} - \frac{ g^{(-5)}_{rr}}{2r^3})(r \tilde \sigma_{ab} - \frac{g^{(-1)}_{ab}}{2 r^2}) (\frac{ \tilde \sigma^{ab}}{r^2} - \frac{(g^{(0)})^{ab}}{r^4 }- \frac{(g^{(-1)})^{ab}}{r^5}) + O(r^{-4}) \\
= & 2+ \frac{1}{r^2} - \frac{ g^{(-5)}_{rr} + \frac{3}{2} tr_{S^2} g^{(-1)}_{ab} }{r^3} + O(r^{-4}).
\end{split}
\]

For $\langle H, e_4 \rangle$, we have
\[  
\begin{split}
   -\langle H, e_4 \rangle
= &   \sigma^{ab}( \sigma_{ab}  + p_{ab} ) \\
=& 2 + \frac{ tr_{S^2} p^{(-1)}_{ab} }{r^3} + O(r^{-4}).
\end{split}
\]
As a result, 
\[  
\begin{split}
   |H|^2  =&  \langle H, e_3 \rangle ^2 - \langle H, e_4 \rangle ^2 \\ 
= & \frac{4}{r^2} - \frac{4  g^{(-5)}_{rr} +4tr_{S^2} p^{(-1)}_{ab} + 6 tr_{S^2}  g^{(-1)}_{ab} }{r^3}+O(r^{-4})
\end{split}
\]
and the expansion for $|H|$ follows. 

Recall from \cite{Wang-Yau3}, we have
\[  ( \alpha_{H})_a =- k(e_3, \partial_a) + \nabla_a \theta \]
where 
\[  \sinh (\theta) = \frac{- \langle H, e_4 \rangle }{|H|}. \]
By the definition of asymptotically hyperbolic initial data set, we have $  k(e_3, \partial_a) =O(r^{-2}).$
Using the expansion for $\langle H, e_4 \rangle $ and $|H|$, we conclude that 
\[  e^\theta =  2r + ( tr_{S^2} p^{(-1)}_{ab}+  \frac{3}{2} tr_{S^2}   g^{(-1)}_{ab}  + g^{(-5)}_{rr})  +  O(r^{-1}) \]
which implies
\[  \nabla_a \theta = \frac{ \partial_a ( tr_{S^2} p^{(-1)}_{ab}+  \frac{3}{2} tr_{S^2}   g^{(-1)}_{ab}  + g^{(-5)}_{rr})  }{2r} +O(r^{-2}).  \]
\end{proof}

%\begin{corollary}
%The following relations hold on the geometric data of coordinate spheres of an asymptotically hyperbolic initial data set:
%\begin{equation}\label{data_relation} m=h^{(-2)}  \text{ and } \alpha_H^{(-1)}=-\frac{1}{2} d(h^{(-2)})\end{equation} where $m$ is the mass aspect function \eqref{mass_aspect}.
%\end{corollary}
In \cite{Wang-Yau3}, it is shown that for an asymptotically flat initial data set, the limit of the quasi-local energy of coordinate spheres with reference isometric embedding into $\R^3$ recovers the ADM energy-momentum 4-vector. We prove a similar result for  asymptotically hyperbolic initial data sets. Let $X_r$ be the isometric embedding of $\Sigma_r$ into the totally geodesics $\R^3$ in $\R^{3,1}$.  
\begin{theorem}  \label{limit_vector}
Given an asymptotically hyperbolic initial data set $(M,g,k)$, the limit of the quasi-local energy of $\Sigma_r$ with reference embedding $X_r$ into $\R^3$ is a linear function dual to the energy-momentum of $(M,g,k)$.
\end{theorem}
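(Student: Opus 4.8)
The plan is to compute the limit of the Wang--Yau quasi-local energy $E(\Sigma_r,\tau_r)$ for a general (admissible) family of boundary values $\tau_r$ on $\Sigma_r$, using the data expansion from Lemma~\ref{data_expansion}, and to show that the limit depends on $\tau_r$ only through its leading-order behavior, which encodes the projection onto a timelike vector $T_0\in\R^{3,1}$. Concretely, the reference isometric embedding $X_r:\Sigma_r\to\R^3$ is determined (up to rigid motion) by the induced metric $\sigma_{ab}=r^2\tilde\sigma_{ab}+g^{(0)}_{ab}+g^{(-1)}_{ab}/r+O(r^{-2})$; since $g^{(0)}_{ab}$ is traceless, the Gauss curvature of $r^{-2}\sigma$ is $1+O(r^{-2})$, so $\Sigma_r$ embeds as a nearly round sphere of area radius $\approx r$, and one gets expansions for $\widehat H$ (the mean curvature of the projection of $X_r$ to the orthogonal complement of $T_0$) and for $H_0,\alpha_{H_0}$ in powers of $1/r$. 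I would take $\tau_r = r\,\tilde X^i a_i + (\text{lower order})$ corresponding to $T_0$ having spatial part $(a_1,a_2,a_3)$ — or, to keep the cleanest bookkeeping, first do the case $T_0=(1,0,0,0)$, i.e. $\tau_r\equiv 0$ (the Liu--Yau case), and then recover the full linear functional by the boost argument already sketched in the introduction for Theorem~B.

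The main computation is to insert the expansions
\[ |H|=\frac{2}{r}-\frac{m}{r^2}+O(r^{-3}),\qquad (\alpha_H)_a=\frac{\partial_a m}{2r}+O(r^{-2}), \]
together with the reference-data expansions, into the second expression for the quasi-local energy, $\frac{1}{8\pi}\int_\Sigma(\rho+j_a\nabla^a\tau)\,d\Sigma$. One checks that $\rho$ and $j_a$ each carry an overall factor that makes $\rho\,d\Sigma$ and $j_a\nabla^a\tau\,d\Sigma$ of order $O(1)$ on $\Sigma_r$, and that the $O(1)$ terms are controlled by $m$ and by the leading coefficient of $\tau_r/r$ on $S^2$. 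For $\tau_r\equiv 0$ the term $j_a\nabla^a\tau$ drops and $\rho = |H_0|-|H|+O(\text{l.o.t.})$; matching $\widehat H$ with $H_0$ for the flat reference and using $\int_{S^2}d\tilde\sigma\cdot(\text{Gauss curvature correction})$ integrates to zero, one is left with
\[ \lim_{r\to\infty}E(\Sigma_r,0)=\frac{1}{8\pi}\int_{S^2}m\,dS^2 = E_{AH}. \]
For the boosted families $\tau_r$, the extra contributions assemble (after integration by parts on $S^2$, using that $\tilde X^i$ are first eigenfunctions of $\tilde\Delta$) into $-\frac{1}{8\pi}\int_{S^2}a_i\tilde X^i m\,dS^2 = -\sum_i a_i P^i_{AH}$, so the limit is the linear function $T_0\mapsto a^0 E_{AH}-\sum_i a^i P^i_{AH}$ dual to $(E_{AH},P^i_{AH})$, which is the assertion.

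The step I expect to be the main obstacle is the careful expansion of the \emph{reference} quantities $\widehat H$, $H_0$, and $\alpha_{H_0}$ for the family $X_r$ as $T_0$ (equivalently the boost parameter) is varied, and showing that the error terms in $\rho$ and $j_a$ genuinely do not contribute in the limit. This requires controlling the isometric embedding $X_r$ to sufficiently high order — one needs the $1/r$ and possibly $1/r^2$ coefficients of $X_r$, which are governed by $g^{(0)}_{ab}$ and $g^{(-1)}_{ab}$ — and verifying that the potentially dangerous cross terms (e.g. those involving $\Delta\tau_r$, which is $O(r)$, paired against $|H|\sim 2/r$) are finite and integrate correctly against $d\Sigma\sim r^2\,d\tilde\sigma$. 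A secondary subtlety is admissibility: one should check that the chosen $\tau_r$ (equivalently $(X_r,T_0)$) is an admissible pair in the sense required for the quasi-local energy formula, at least asymptotically, which follows from the convergence of $\Sigma_r$ to large round spheres and the positivity of $|H_0|$, $|H|$ to leading order. Once these asymptotic matchings are in hand, the final identification with $(E_{AH},P^i_{AH})$ is a short integration-by-parts on $S^2$.
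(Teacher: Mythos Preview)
Your direct-expansion approach is valid and would work, but the paper takes a much shorter route. Rather than re-analyzing the quasi-local energy formula for each $T_0$, the paper first verifies the single hypothesis $\lim_{r\to\infty}|H|/H_0=1$ (immediate from Lemma~\ref{data_expansion} together with $H_0=2/r+O(r^{-3})$ for a nearly-round sphere) and then invokes Corollary~2.1 of \cite{Wang-Yau3}. That corollary already packages the entire general-$T_0$ computation you propose: under this hypothesis the limit $\lim_{r\to\infty}E(\Sigma_r,X_r,T_0)$ is automatically linear in $T_0$, dual to the four-vector $(e,p^i)$ with
\[
e=\frac{1}{8\pi}\lim_{r\to\infty}\int_{\Sigma_r}(H_0-|H|)\,d\Sigma_r,\qquad
p^i=\frac{1}{8\pi}\lim_{r\to\infty}\int_{\Sigma_r}X^i\,\nabla^a(\alpha_H)_a\,d\Sigma_r.
\]
The proof then reduces to inserting the expansions of $|H|$ and $(\alpha_H)_a$ from Lemma~\ref{data_expansion} into these two integrals and reading off $E_{AH}$ and $P^i_{AH}$. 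Your route would effectively re-derive the cited corollary in this setting; it is more self-contained but considerably longer, and the ``potentially dangerous cross terms'' you flag (arising because $|\nabla\tau|^2=O(1)$ when $T_0\neq(1,0,0,0)$) are exactly what that corollary absorbs.

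One caution on your fallback: the suggestion to ``recover the full linear functional by the boost argument already sketched in the introduction for Theorem~B'' conflates two distinct operations. Theorem~B boosts the \emph{foliation} of $M$---replacing $\Sigma_r$ by the surfaces $r=R\tilde F$, which changes the physical data $(\sigma,|H|,\alpha_H)$---and then computes the Liu--Yau mass ($T_0=(1,0,0,0)$) of the new spheres. The present theorem keeps the original $\Sigma_r$ fixed and varies the observer $T_0$. Both ultimately produce $a^0E_{AH}-\sum_ia^iP^i_{AH}$, but for different reasons, and the second statement does not follow from the first without an additional equivariance argument. (Also a minor slip: $\Delta\tau_r$ with respect to $\sigma$ is $O(r^{-1})$, not $O(r)$, since $\Delta_\sigma\approx r^{-2}\tilde\Delta$ and $\tau_r=O(r)$.)
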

\begin{proof}
Let $H_0$ be the mean curvature of the image of $X_r$ in $\R^3$. From the expansion of the induced metric and the result in \cite{Chen-Wang-Yau1}, it follows that $H_0 = \frac{2}{r} + O(r^{-3})$ and thus 
\[  \lim_{r \to \infty }  \frac{|H|}{H_0} = 1.
 \]
By Corollary 2.1 of \cite{Wang-Yau3}, the limit of 
\[\lim_{r \to \infty }   (\Sigma_r , X_r, T_0) \] 
is a linear function dual to the four vector $(e, p^i)$ where
\[
\begin{split}
e =  & \frac{1}{8 \pi} \lim_{r \to \infty }   \int_{\Sigma_r} (H_0 -|H| ) d \Sigma_r  \\
p^i = &  \frac{1}{8 \pi} \lim_{r \to \infty }   \int_{\Sigma_r}  X^i \nabla^a (\alpha_H)_a d \Sigma_r.
\end{split}
\]
By equation \eqref{eq:expansion_coordinate},  
\[   \frac{1}{8 \pi}   \int_{\Sigma_r} (H_0 -|H| ) d \Sigma_r  = \frac{1}{8 \pi} \int_{S^2}( \frac{3}{2}tr_{S^2} \tilde g^{(-1)}_{ab}+  tr_{S^2}p^{(-1)}_{ab} +g^{(-5)}_{rr})  dS^2  + O(r^{-1})\]
and 
\[ \frac{1}{8 \pi}  \int_{\Sigma_r}  X^i \nabla^a (\alpha_H)_a d \Sigma_r = -\frac{1}{8 \pi} \int_{S^2} \tilde X^i( \frac{3}{2}tr_{S^2}g^{(-1)}_{ab}+  tr_{S^2}p^{(-1)}_{ab} +g^{(-5)}_{rr})  dS^2 + O(r^{-1}) .  \]
This finishes the proof of the theorem.
\end{proof}
\begin{remark}
The data $(\sigma,|H|,\alpha_H)$ has the same form of expansion as that of \cite{Chen-Wang-Yau1} and section 3 and 4 of \cite{Chen-Wang-Yau1} can be applied to the asymptotically hyperbolic case as well. Assuming the total energy momentum vector $(E_{AH}, P^i_{AH})$ is timelike, Let 
\[ M_{AH} = \sqrt{E_{AH}^2 - \sum_i (P^i_{AH})^2}.  \]
There is a local minimum $(X(r),T_0(r))$ of the quasi-local energy with 
\[
\begin{split}
X^0(r) = & (X^0)^{(0)} + O(r^{-1})\\
X^i (r)= & r\tilde X^i+\frac{(X^i)^{(-1)}}{r}+ O(r^{-2})\\
T_0(r) = & (a^0,a^1,a^2,a^3) + \frac{T_0^{(-1)}}{r}+  O(r^{-2}).
\end{split} 
\]
where $ (a^0,a^1,a^2,a^3)$ is the unit timelike vector aligned with the total energy-momentum 4-vector. 
 \[M_{AH}( a^0,a^1,a^2,a^3) = (E_{AH}, P^i_{AH}).\] 
Moreover,
\[ \lim_{r \to \infty} E(\Sigma_r, X(r),T_0(r)) =  M_{AH}. \]
\end{remark}
%%%%%%%%%%%%%%%%%%%%%%%%%%%%%%%%%%%%%%%%%%%%%%%%%%%%%%%%%%

\section{Spacetime positive mass theorem}
Theorem \ref{limit_vector} and the positivity of Liu-Yau quasi-local mass implies $E_{AH} \ge 0$ if $(M,g,k)$ satisfies the dominant energy condition. We  prove that the energy-momentum vector is non-spacelike by evaluating the limit of quasi-local energy on boosted coordinate spheres in $M$.

The hyperbolic space can be isometrically embedded into $\R^{3,1}$ with metric $-dt^2+\sum_{i=1}^3 (dx^i)^2$ as the set
\[ \h^3 = \{ (t,x^i) \, | \, t^2=1+\sum_i  (x^i)^2, t>0 \}.\]
The Lorentz group of $\R^{3,1}$ acts on $\h^3$ by isometry. In particular, we consider the isometries corresponding to boost elements of the Lorentz group. Let $(M, g, k)$ be an asymptotically hyperbolic initial data set and $(r, u^1, u^2)$ be the asymptotically hyperbolic coordinate system at
infinity. For each future directed unit timelike vector $(a^0,a^i)$ in $\R^{3,1}$, we consider the family of surfaces $\Sigma_R$, $R>>1$  on $M$ defined by
 \[ \Sigma_R= \{ (r, u^1, u^2):  r= R\tilde F(u^1, u^2)  \}  \]
where 
\[  \tilde F = \frac{1}{a^0+ \sum_i a^i \tilde X^i} \] and $\tilde{X}^i(u^1, u^2), i=1, 2, 3$ are the three standard  coordinate functions on $S^2$. They are all $-2$ eigenfunctions with respect to the standard
round metric $\tilde{\sigma}_{ab}$ on $S^2$. Notice that $\tilde F^2 \tilde
 \sigma_{ab}$ is isometric to $\tilde \sigma_{ab}$ and $\tilde F$ satisfies the following constant Gauss curvature equation
\[ \frac{1}{\tilde F^2} (1 - \widetilde \Delta \ln \tilde F) =1  \]
where $\widetilde \Delta $ is the Laplace operator with respect to the metric $\tilde \sigma_{ab}$.
\begin{lemma}
Let $\Sigma_R$ be the above family of surfaces. Let $\sigma$ and  $H$ be the induced metric and the mean curvature vector  of $\Sigma_R$, respectively. Then,
\begin{equation} \label{eq:expansion_change}
\begin{split}
\sigma_{ab} =& (\tilde F ^2 R^2) \tilde \sigma_{ab} +g^{(0)}_{ab}+ \frac{\tilde F_a \tilde F_b}{\tilde F^2}+ \frac{g_{ab}^{(-1)}}{\tilde F R} +O(R^{-2}) \\ 
|H| = &  \frac{2}{R} - \frac{g_{rr}^{(-5)} +tr_{S^2} p_{ab}^{(-1)} + \frac{3}{2} tr_{S^2} g_{ab}^{(-1)} }{\tilde F^3 R^2} +O(R^{-3}).
\end{split}
\end{equation}
\end{lemma}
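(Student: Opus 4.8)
The plan is to compute the induced metric and the mean curvature vector of the boosted surfaces $\Sigma_R$ directly, working in the asymptotically hyperbolic coordinates $(r,u^1,u^2)$ and substituting $r = R\tilde F(u^1,u^2)$. The key conceptual point is that on $\Sigma_R$ the radial coordinate is no longer constant along the surface, so the pullback of $dr$ picks up the tangential term $R\,\tilde F_a\,du^a$; this is precisely what produces the extra $\tilde F_a\tilde F_b/\tilde F^2$ contribution to $\sigma_{ab}$ and forces us to keep track of the cross terms $g_{ra}\,dr\,du^a$ and the $g_{rr}\,dr^2$ term even though they are of lower order in $r$. So the first step is: pull back $g = g_{rr}\,dr^2 + 2g_{ra}\,dr\,du^a + g_{ab}\,du^a du^b$ under $r = R\tilde F$, using $dr|_{\Sigma_R} = R\tilde F_a\,du^a$, and collect terms in powers of $R$, using the decay assumptions of Definition \ref{a_h_coordinates}. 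The leading term is $R^2\tilde F^2\tilde\sigma_{ab}$; the $O(R^0)$ term is $g^{(0)}_{ab} + R^2 g_{rr}\tilde F_a\tilde F_b = g^{(0)}_{ab} + \tilde F_a\tilde F_b/\tilde F^2$ (using $g_{rr} = 1/r^2 + O(r^{-4})$ and $r = R\tilde F$); the next term is $g^{(-1)}_{ab}/(\tilde F R)$, with the cross terms $g_{ra}$ contributing only at $O(R^{-2})$ since $g_{ra} = O(r^{-3})$.

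For the mean curvature norm, the plan is to mimic the computation in Lemma \ref{data_expansion}: write $H$ in the frame $(e_3,e_4)$ where $e_4$ is the unit normal of $M$ in $N$ and $e_3$ is the unit normal of $\Sigma_R$ in $M$, and compute $|H|^2 = \langle H,e_3\rangle^2 - \langle H,e_4\rangle^2$. The normal $e_3$ now must be orthogonal to the tangent vectors $\partial_{u^a} + R\tilde F_a\,\partial_r$ of $\Sigma_R$, so it is a combination of $\partial_r$ and $\partial_{u^a}$ whose leading behavior involves $\tilde F$; I would write $e_3 = \lambda(\partial_r + W^a\partial_{u^a})$ with $W^a$ determined by the orthogonality condition and $\lambda$ by normalization, then expand. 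The second fundamental form $\langle\nabla_{\partial_{u^a}+R\tilde F_a\partial_r} e_3,\ \partial_{u^b}+R\tilde F_b\partial_r\rangle$ and its trace against $\sigma^{ab}$ give $\langle H,e_3\rangle$; the constant Gauss curvature equation $\tilde F^{-2}(1 - \widetilde\Delta\ln\tilde F) = 1$ should be used to simplify the $O(R^{-1})$ contributions coming from the derivatives of $\tilde F$, which is where the combination $\tilde F^3$ in the denominator emerges. For $\langle H,e_4\rangle = -\sigma^{ab}(\sigma_{ab}+p_{ab})$ pulled back to $\Sigma_R$, the computation parallels Lemma \ref{data_expansion} and contributes the $tr_{S^2}p^{(-1)}_{ab}$ term, again with a $\tilde F^3$ scaling. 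Taking the difference of squares and a square root yields the stated expansion for $|H|$.

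The main obstacle I anticipate is the bookkeeping of the $O(R^{-1})$ term in $|H|$: the derivatives of $\tilde F$ enter both through $e_3$ (which is no longer simply $\sqrt{g_{rr}}^{-1}\partial_r$) and through the extra $\tilde F_a\tilde F_b/\tilde F^2$ term in $\sigma_{ab}$, and a priori there could be leftover curvature-of-$\tilde F$ contributions at this order. The point is that all such contributions must cancel against each other — equivalently, they are absorbed by the identity $\tilde F^{-2}(1-\widetilde\Delta\ln\tilde F)=1$, which expresses that $\tilde F^2\tilde\sigma_{ab}$ has the same (constant) Gauss curvature as $\tilde\sigma_{ab}$ — leaving only the mass-aspect combination $g^{(-5)}_{rr} + tr_{S^2}p^{(-1)}_{ab} + \tfrac{3}{2}tr_{S^2}g^{(-1)}_{ab}$ rescaled by $\tilde F^{-3}$. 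Once this cancellation is verified the rest is routine Taylor expansion. A useful sanity check is that setting $a^i = 0$, so $\tilde F \equiv 1$, recovers exactly the expansion of Lemma \ref{data_expansion}; I would verify this at the end.
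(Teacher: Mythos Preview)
Your proposal is correct and follows essentially the same approach as the paper: pull back the metric via the tangent vectors $\partial_{u^a}+R\tilde F_a\,\partial_r$, compute $\langle H,e_3\rangle$ and $\langle H,e_4\rangle$ separately with $e_3=\lambda(\partial_r+W^a\partial_{u^a})$ determined by orthogonality, and invoke the constant Gauss curvature identity $\tilde F^{-2}(1-\widetilde\Delta\ln\tilde F)=1$ to collapse the $\tilde F$-derivative terms in $\langle H,e_3\rangle$ to the clean constant that produces the leading $2/R$. The paper carries out exactly this computation explicitly; your anticipation of where the cancellation occurs and why is accurate.
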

%\begin{remark}
%We can consider the family of surfaces defined by
% \[ \Sigma_R= \{  r= \tilde F R + \frac{\tilde G}{R}+O(R^{-2})  \}.  \]
%Then $|H_R|$ have the same expansion as above and
%\[ \sigma_{ab} = (\tilde F ^2 R^2) \tilde \sigma_{ab} + \frac{\tilde F_a \tilde F_b}{\tilde F^2} +O(R^{-1}). \]
%\end{remark}
\begin{proof} 
The tangent space of $\Sigma_R$ is spanned by $ \frac{\partial }{\partial u^a} + r_a \frac{\partial }{\partial r}, a=1, 2$.
It follows immediately from the definition of asymptotically hyperbolic initial data sets that 
\[ 
\sigma_{ab} = (\tilde F ^2 R^2) \tilde \sigma_{ab}+g^{(0)}_{ab} + \frac{\tilde F_a \tilde F_b}{\tilde F^2}+ \frac{g_{ab}^{(-1)}}{\tilde F R} +O(R^{-2}).
 \]
As a result, we also have
\[ 
\sigma^{ab} = \tilde F^{-2}R^{-2}( \tilde \sigma^{ab} -\frac{(g^{(0)})^{ab}}{ \tilde F^2 R^2}-\frac{\tilde F^a \tilde F^b}{ \tilde F^4 R^2}- \frac{(g^{(-1)})^{ab}}{\tilde F^3 R^3} ) +O(R^{-6}).
 \]
Let $e_3$ be the unit normal of $\Sigma_R$ in $M$ and $e_4$ be the unit normal of $M$ in $N$.  
We compute \[  
\begin{split}
 -  \langle H, e_4 \rangle 
= &   \sigma^{ab}( \sigma_{ab}  + p_{ab} + r_a r_b p_{rr} + r_a p_{rb} + r_b p_{ra} ) \\
=& 2 + \frac{ tr_{S^2} p^{(-1)}_{ab} }{\tilde F^3 R^3} + O(R^{-4}).
\end{split}
\]
To compute $\langle H, e_3\rangle$, start with 
\[  e_3= \frac{1}{  \sqrt{ g_{rr}+ V^aV^bg_{ab} -2V^ag_{ar} } }(\frac{\partial }{\partial r} + V^a  \frac{\partial }{\partial u^a} ) \]
where 
\[  V_a = \frac{ - r_a}{1+r^2} +O(R^{-3}). \]

By definition,
\[  
\begin{split}
   & - \langle H, e_3 \rangle  \\
= &  \frac{\sigma^{ab}}{  \sqrt{ g_{rr}+ V^dV^eg_{de} -2V^dg_{dr} } }  \langle \nabla_{\frac{\partial}{\partial u^a} + r_a \frac{\partial }{\partial r} }  (\frac{\partial }{\partial r}-\frac{r^c}{1+r^2} \frac{\partial }{\partial u^c} )  , \frac{\partial}{\partial u^b}  + r_b \frac{\partial }{\partial r} \rangle + O(R^{-4}).
\end{split}
\]

Plugging in $r=R \tilde{F}(u^1,u^2)$, we deduce
\[   \frac{1}{  \sqrt{ g_{rr}+ V^aV^bg_{ab} -2V^ag_{ar} } } = R\tilde F \left [ 1+ \frac{\tilde F^2 - |\tilde \nabla \tilde F|^2}{2R^2 \tilde F^4} - \frac{g_{rr}^{(-5)}}{2R^3 \tilde F^3} \right ] + O(R^{-3}). \]

Continuing the calculation, 
\[  
\begin{split}
   &   \langle \nabla_{\frac{\partial}{\partial u^a} + r_a \frac{\partial }{\partial r} } ( \frac{\partial }{\partial r} - \frac{r^c}{1+r^2} \frac{\partial }{\partial u^c})   , \frac{\partial}{\partial u^b} + r_b \frac{\partial }{\partial r} \rangle  \\
= & \frac{1}{2} \partial_r g_{ab} - \langle \nabla_{\frac{\partial}{\partial u^a} } \frac{r^c}{1+r^2} \frac{\partial }{\partial u^c}   , \frac{\partial}{\partial u^b}  \rangle + r_a r_b \frac{1}{2} \partial_r (\frac{1}{1+r^2})\\
 & - \langle \nabla_{r_a \frac{\partial}{\partial r} } \frac{r^c}{1+r^2} \frac{\partial }{\partial u^c}   ,   \frac{\partial}{\partial u^b}  \rangle 
    - \langle \nabla_{\frac{\partial}{\partial u^a} } \frac{r^c}{1+r^2} \frac{\partial }{\partial u^c}  ,+ r_b \frac{\partial }{\partial r} \rangle + O(R^{-3}) \\
=  &  r \sigma_{ab} - \frac{g_{ab}^{(-1)}}{2r^2} + \frac{r_a r_b}{r^3} + \nabla_a \nabla_b \frac{1}{r} + O(R^{-3}) \\
=& R \tilde F \tilde \sigma_{ab} + \frac{1}{R} (\frac{\tilde F_a \tilde F_b}{\tilde F^3} + \nabla_a\nabla_b\frac{1}{\tilde F}) - \frac{g_{ab}^{(-1)}}{2R^2 \tilde F^2}+ O(R^{-3}) .
\end{split}
\]
As a result, 
\[  
\begin{split}
   &  -\langle H, e_3 \rangle  \\
%= &  \left [ 1+ \frac{\tilde F^2 - |\tilde \nabla \tilde F|^2}{2R^2 \tilde F^4} - \frac{g_{rr}^{(-5)}}{2R^3 \tilde F^3} \right ] \left [ \tilde \sigma^{ab}  -\frac{(g^{(0)})^{ab}}{ \tilde F^2 R^2}- \frac{\tilde F_a \tilde F_b}{ \tilde F^4 R^2}- \frac{(g^{(-1)})^{ab}}{\tilde F^3 R^3} \right] 
% \left[  \tilde\sigma_{ab} + \frac{1}{R^2} (\frac{\tilde F_a \tilde F_b}{\tilde F^4} + \frac{\nabla_a\nabla_b\frac{1}{\tilde F}) }{\tilde F}- \frac{g_{ab}^{(-1)}}{2R^3 \tilde F^3}\right] + O(R^{-4}) \\
= & 2 +  \frac{1}{R^2}  \left (\frac{1}{\tilde F^2} - \frac{|\tilde \nabla \tilde F|^2}{\tilde F^4} +\frac{1}{\tilde F}\widetilde \Delta \frac{1}{\tilde F}\right ) - \frac{1}{R^3} \left (\frac{g_{rr}^{(-5)}}{\tilde F^3} +\frac{3 tr_{S^2} g_{ab}^{(-1)} }{2 \tilde F^3} \right ) + O(R^{-4}) \\
= & 2 +  \frac{1}{R^2} - \frac{1}{R^3} \left (\frac{g_{rr}^{(-5)}}{\tilde F^3} +\frac{3 tr_{S^2} g_{ab}^{(-1)} }{2 \tilde F^3} \right ) + O(R^{-4}).
\end{split}
\]
In the last equality, we used that 
\[  \frac{1}{\tilde F^2} - \frac{|\tilde \nabla \tilde F|^2}{\tilde F^4}   +\frac{1}{\tilde F}\widetilde \Delta \frac{1}{\tilde F}=\frac{1}{\tilde F^2} (1 - \widetilde \Delta \ln \tilde F) =1.   \]
Therefore,
\[  
\begin{split}
   |H|^2  =&  \langle H, e_3 \rangle ^2 - \langle H, e_4 \rangle ^2 \\ 
= & \frac{4}{R^2} - \frac{4 g_{rr}^{(-5)} +4tr_{S^2} p^{(-1)}_{ab} +6 tr_{S^2} g_{ab}^{(-1)} }{R^3 \tilde F^3}+O(R^{-4}).
\end{split}
\]
\end{proof}
We prove the positive mass theorem for asymptotically hyperbolic initial data set in the following.
\begin{theorem} \label{thm_positivity}
Suppose  $(M,g,k)$ is an asymptotically hyperbolic initial data set satisfying the dominant energy condition, then its  energy-momentum 4-vector is future directed non-spacelike. Namely,
\[   E_{AH}  \ge \sqrt{\sum_i (P^i_{AH})^2 } . \]
\end{theorem}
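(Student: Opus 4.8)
The plan is to realize a Lorentz boost not by changing the observer $T_0$ for a fixed surface but by boosting the coordinate spheres inside $M$, and to extract a Lorentz-rotated combination of $E_{AH}$ and $P_{AH}^i$ from the limit of the Liu--Yau mass of the boosted surfaces. Fix a future directed unit timelike vector $(a^0,a^i)\in\R^{3,1}$ and let $\Sigma_R$ be the associated family of surfaces with $\tilde F=(a^0+\sum_i a^i\tilde X^i)^{-1}$ as in the preceding Lemma. Since $a^0>\sqrt{\sum_i(a^i)^2}$, the function $\tilde F$ is pinched between two positive constants, so for $R$ sufficiently large $\Sigma_R$ lies in the asymptotic region, is a smooth embedded topological sphere, and bounds a compact hypersurface-with-boundary $\Omega_R\subset M$.

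First I would check that the Liu--Yau mass of $\Sigma_R$ is defined and non-negative for $R$ large. By the Lemma the induced metric $\sigma_{ab}$ has leading term $R^2\tilde F^2\tilde\sigma_{ab}$, and since $\tilde F$ solves $\tilde F^{-2}(1-\widetilde\Delta\ln\tilde F)=1$ this leading term has constant Gauss curvature $R^{-2}$; hence for $R$ large $\sigma_{ab}$ has positive Gauss curvature and, by Weyl's embedding theorem, admits a unique isometric embedding $X_R$ into the totally geodesic $\R^3\subset\R^{3,1}$, whose image has mean curvature $H_0$. The Lemma also gives $|H|^2=4R^{-2}+O(R^{-3})>0$, so the mean curvature vector of $\Sigma_R$ is spacelike, and as recalled in the excerpt $E(\Sigma_R,0)=m_{LY}(\Sigma_R)=\frac{1}{8\pi}\int_{\Sigma_R}(H_0-|H|)\,d\Sigma_R$ is well defined. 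Since $\Omega_R$ satisfies the dominant energy condition, the positivity of the Liu--Yau quasi-local mass \cite{Liu-Yau} yields $m_{LY}(\Sigma_R)\ge 0$.

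Next I would compute the limit. Using the expansion of $\sigma_{ab}$ in the Lemma together with the tracelessness of $g^{(0)}_{ab}$ and the Gauss curvature equation for $\tilde F$, one verifies --- exactly as in Sections 3 and 4 of \cite{Chen-Wang-Yau1}, which apply to the present setting (see the remark after Theorem \ref{limit_vector}) --- that $H_0$ has no $O(R^{-2})$ term, so $H_0=\frac{2}{R}+O(R^{-3})$. Writing $m=g_{rr}^{(-5)}+tr_{S^2}p_{ab}^{(-1)}+\frac{3}{2}tr_{S^2}g_{ab}^{(-1)}$ for the mass aspect function of Definition \ref{aspect}, the Lemma gives $|H|=\frac{2}{R}-\frac{m}{\tilde F^3 R^2}+O(R^{-3})$, and the area element satisfies $d\Sigma_R=(R^2\tilde F^2+O(1))\,dS^2$ where $dS^2$ is the area element of the unit round sphere. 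Hence only the product of leading terms survives and
\[\lim_{R\to\infty}m_{LY}(\Sigma_R)=\frac{1}{8\pi}\int_{S^2}\frac{m}{\tilde F}\,dS^2=\frac{1}{8\pi}\int_{S^2}m\Big(a^0+\sum_i a^i\tilde X^i\Big)\,dS^2=a^0E_{AH}-\sum_i a^iP_{AH}^i,\]
the last equality being the definition of $(E_{AH},P_{AH}^i)$ (Definition \ref{definition}); the sign of the $a^i$-term is immaterial for what follows, since $(a^0,-a^i)$ is future directed timelike whenever $(a^0,a^i)$ is.

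Combining the two parts, $a^0E_{AH}-\sum_i a^iP_{AH}^i\ge 0$ for every future directed unit timelike $(a^0,a^i)$. Parametrizing $a^i=\xi^i/\sqrt{1-|\xi|^2}$ and $a^0=1/\sqrt{1-|\xi|^2}$ with $\xi\in\R^3$, $|\xi|<1$, this reduces to $E_{AH}\ge\sum_i\xi^iP_{AH}^i$; the choice $\xi=0$ gives $E_{AH}\ge 0$, while, when $P_{AH}\neq 0$, letting $\xi$ tend to $(P_{AH}^1,P_{AH}^2,P_{AH}^3)/\sqrt{\sum_i(P_{AH}^i)^2}$ gives $E_{AH}\ge\sqrt{\sum_i(P_{AH}^i)^2}$, which is the claim. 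I expect the main obstacle to be the computation of $\lim_{R\to\infty}m_{LY}(\Sigma_R)$: one must show that the reference mean curvature $H_0$ of the non-round boosted surfaces carries no $O(R^{-2})$ contribution --- so that only the genuine mass-aspect data survives --- and keep track of the area element, which is exactly where the Lorentz weight $1/\tilde F=a^0+\sum_i a^i\tilde X^i$ enters; granting the preceding Lemma, checking that the $\Sigma_R$ are admissible for the Liu--Yau mass is routine.
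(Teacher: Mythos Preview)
Your proposal is correct and follows essentially the same route as the paper: boost the coordinate spheres by $\tilde F=(a^0+\sum_i a^i\tilde X^i)^{-1}$, invoke the positivity of the Liu--Yau mass, and compute $\lim_{R\to\infty}m_{LY}(\Sigma_R)=\frac{1}{8\pi}\int_{S^2}m/\tilde F\,dS^2$ so that every future timelike $(a^0,a^i)$ pairs non-negatively with $(E_{AH},P_{AH}^i)$. The only notable difference is that the paper obtains $H_0=\tfrac{2}{R}+O(R^{-3})$ in one line by observing that $\tilde F^2\tilde\sigma_{ab}$ is globally isometric to $\tilde\sigma_{ab}$ (so the leading term of $\sigma_{ab}$ is already a round sphere of radius $R$), whereas you appeal to the machinery of \cite{Chen-Wang-Yau1}; your observation about the sign of the $a^i$-term is well taken and harmless for the conclusion.
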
 
\begin{proof} 
For any future directed unit timelike vector $(a^0,a^i)$ in $\R^{3,1}$,  let $\tilde F= \frac{1}{ a^0 + \sum_i a^i \tilde X^i}$ and 
 $\Sigma_R$  be the surfaces in  $M$ defined by
 \[ \Sigma_R= \{ (r, u^1, u^2): r=R \tilde F (u^1, u^2)  \}  \]
By \eqref{eq:expansion_change}, the induced metric $\sigma_{ab}$ and mean curvature vector $H$ of $\Sigma_R$ satisfy
\begin{equation} 
\begin{split}
\sigma_{ab} =& (\tilde F ^2 R^2) \tilde \sigma_{ab} + O(1) \\ 
|H| = &  \frac{2}{R} - \frac{g_{rr}^{(-5)}+ tr_{S^2} p^{(-1)}_{ab} + \frac{3}{2} tr_{S^2} g_{ab}^{(-1)} }{\tilde F^3 R^2} +O(R^{-3}).
\end{split}
\end{equation}
Let  $H_0$ be the mean curvature of the image of isometric embedding of $\Sigma_R$ in $\R^3$. Recall that the Liu-Yau mass of the surface $\Sigma_R$ is
\[ m_{LY}(\Sigma_R) = \frac{1}{8 \pi}\int_{\Sigma_R} (H_0 - |H| )d \Sigma_R. \]
We have
\[  H_0= \frac{2}{R} + O(R^{-3}). \]
since $(\tilde F ^2 R^2) \tilde \sigma_{ab}$ is isometric to $ R^2 \tilde \sigma_{ab}$. As a result,
\begin{equation}\label{equvariant}
 \int_{\Sigma_R} (H_0 - |H|) d \Sigma_R  = \int_{S^2 }  \frac{ g_{rr}^{(-5)}+ tr_{S^2} p_{ab}^{(-1)} + \frac{3}{2} tr_{S^2} g_{ab}^{(-1)}  }{\tilde F } dS^2 + O(R^{-1}). \end{equation}
By  the positivity of the Liu-Yau mass \cite{Liu-Yau}, we have
\[   \int_{S^2 }  \frac{ g_{rr}^{(-5)}+ tr_{S^2} p_{ab}^{(-1)} + \frac{3}{2} tr_{S^2} g_{ab}^{(-1)}  }{\tilde F }  dS^2 \ge 0. \]
Moreover, by Theorem \ref{limit_vector},
\[  \frac{1}{8 \pi}  \int_{S^2 } \frac{ g_{rr}^{(-5)}+ tr_{S^2} p_{ab}^{(-1)} + \frac{3}{2} tr_{S^2} g_{ab}^{(-1)}  }{\tilde F } dS^2 = a^0 E_{AH} - \sum_i a^i P^i_{AH}. \]
Hence,
\[  a^0 E_{AH} - \sum_i a^i P^i_{AH}  \ge 0  \]
for  any future directed unit timelike vector $(a^0,a^i)$ in $\R^{3,1}$. It follows that $(E_{AH}, P^i_{AH})$ is future-directed non-spacelike.
\end{proof}
\begin{corollary}\label{cor_vanish_momentum}
Assuming the energy-momentum vector $(E_{AH}, P^i_{AH})$ for a given foliation is timelike, then there exists a foliation with vanishing linear momentum.
\end{corollary}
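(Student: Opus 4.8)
The plan is to exhibit the required foliation explicitly as one of the boosted foliations $\Sigma_R$ of Section 4. I would set $M_{AH}=\sqrt{E_{AH}^2-\sum_i (P^i_{AH})^2}$, which is positive by the timelike hypothesis, and let $(a^0,a^i)=(E_{AH},P^i_{AH})/M_{AH}$ be the future unit timelike vector aligned with the energy-momentum. I take $\tilde F=1/(a^0+\sum_i a^i\tilde X^i)$ and $\Sigma_R=\{\,r=R\tilde F(u^1,u^2)\,\}$; since $\tilde F$ is bounded above and below on $S^2$ this foliates a neighbourhood of infinity for $R$ large. Write $\Lambda$ for the Lorentz boost with $\Lambda(1,0,0,0)=(a^0,a^i)$ and $\psi$ for the induced conformal diffeomorphism of $S^2$ (its action on null directions through the celestial-sphere action of $\Lambda$), and set $u'=\psi(u)$. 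The first step is to check that in the coordinates $(R,u')$ the family $\{\Sigma_R\}$ is again an asymptotically hyperbolic coordinate system in the sense of Definition \ref{a_h_coordinates}: because the coordinate change is an exact conformal transformation modified by lower-order terms, the decay conditions on $g_{rr},g_{ra},g_{ab},p_{rr},p_{ra},p_{ab}$ are preserved. This verification is routine but is where most of the work sits.

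The crux is then to compute the energy-momentum of the new foliation. By the Lemma of Section 4, $\sigma_{ab}=R^2\tilde F^2\tilde\sigma_{ab}+O(1)$ and $|H|=\tfrac{2}{R}-\tfrac{1}{R^2}\,\tfrac{m}{\tilde F^3}+O(R^{-3})$, with $m=g_{rr}^{(-5)}+tr_{S^2}p_{ab}^{(-1)}+\tfrac32\,tr_{S^2}g_{ab}^{(-1)}$ the mass aspect of the given foliation, so the mass aspect $m_{\mathrm{new}}$ of the new foliation satisfies $\psi^*m_{\mathrm{new}}=m/\tilde F^3$. Since $\psi^*\tilde\sigma=\tilde F^2\tilde\sigma$ one has $\psi^*(dS^2)=\tilde F^2\,dS^2$, and since $\Lambda$ carries the null direction $\ell(u)=(1,\tilde X(u))$ to a null vector with time component $1/\tilde F$, the new coordinate functions obey $\psi^*\tilde X^i_{\mathrm{new}}=\tilde F\,(\Lambda\ell)^i$. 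Feeding these into Definition \ref{definition}, and using the identity $\frac{1}{8\pi}\int_{S^2}\frac{m}{\tilde F}\,dS^2=a^0E_{AH}-\sum_i a^iP^i_{AH}$ from the proof of Theorem \ref{thm_positivity} together with its componentwise analogue for $\int_{S^2}(\Lambda\ell)^i m\,dS^2$, one finds that the energy-momentum of the new foliation is $\Lambda^{-1}(E_{AH},P^i_{AH})$ — that is, boosting the foliation transforms the energy-momentum four-vector by the corresponding Lorentz transformation. With $(a^0,a^i)=(E_{AH},P^i_{AH})/M_{AH}$ this equals $\Lambda^{-1}\bigl(M_{AH}(a^0,a^i)\bigr)=M_{AH}(1,0,0,0)$, so the new foliation has vanishing linear momentum (and energy $M_{AH}$, consistent with the Remark following Theorem \ref{limit_vector}).

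I expect the one genuinely delicate point to be the conformal bookkeeping: expressing the new $-2$-eigenfunctions $\tilde X^i_{\mathrm{new}}$ and the new area form in terms of $\Lambda$, i.e.\ keeping track of how the data $(\tilde\sigma,\tilde X^i)$ is carried by the conformal diffeomorphism $\psi$ — and in particular of the orientation/sign convention that decides whether the transformation is $\Lambda$ or $\Lambda^{-1}$ (if it is $\Lambda$, one instead takes $(a^0,a^i)=(E_{AH},-P^i_{AH})/M_{AH}$). Once this is settled everything else is substitution. The remaining task, purely mechanical, is the full verification that $\{\Sigma_R\}$ satisfies all of Definition \ref{a_h_coordinates}, which is needed so that Theorem \ref{thm_conserved} can subsequently be applied to this foliation.
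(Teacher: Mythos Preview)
Your approach is correct and follows the same overall strategy as the paper: take the boosted foliation $\Sigma_R=\{r=R\tilde F\}$ with $(a^0,a^i)=(E_{AH},P^i_{AH})/M_{AH}$, observe that the energy-momentum transforms by the corresponding Lorentz boost, and conclude that the new momentum vanishes.

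The difference is entirely in the level of detail. The paper's proof is three lines: it appeals only to equation \eqref{equvariant}, which says that the \emph{energy} of the boosted foliation equals $a^0E_{AH}-\sum_ia^iP^i_{AH}$, asserts on that basis that ``the energy component of the energy momentum vector transformed equivariantly under boost,'' and immediately concludes. It does not explicitly compute the momentum components of the new foliation, does not introduce the conformal map $\psi$ on $S^2$ or the transformed eigenfunctions $\tilde X^i_{\mathrm{new}}$, and does not verify that the new foliation satisfies Definition \ref{a_h_coordinates}. Your proposal fills in precisely these points: the conformal bookkeeping for $\psi^*\tilde\sigma$, $\psi^*(dS^2)$, and $\psi^*\tilde X^i_{\mathrm{new}}$, the direct computation of all four components of the new energy-momentum, and the acknowledgement that one must check the asymptotics in the new coordinates. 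So you are not doing anything different in spirit; you are supplying the details the paper leaves implicit. The sign/orientation issue you flag (whether the transformation is $\Lambda$ or $\Lambda^{-1}$) is real but, as you note, harmless for the conclusion.
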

\begin{proof}
From equation \eqref{equvariant}, it follows that the energy component of the energy momentum vector transformed equivariantly under boost. As a result, if  the energy-momentum vector $(E_{AH}, P^i_{AH})$ for a given foliation is timelike, then we can align the unit vector $(a^0, a^i)$ with the energy momentum and the new foliation corresponding to  $(a^0, a^i)$ have vanishing linear momentum.
\end{proof}
\section{Energy loss formula  for asymptotically hyperbolic manifolds}
Energy loss formula at null infinity for solutions of the vacuum Einstein equation was first studied in \cite{Bondi-Burg-Metzner} and \cite{Trautman}. In  \cite{sta}, a similar energy loss formula at null infinity is proved as a consequence of the global nonlinear stability of the Minkowski space, for any global development of asymptotically flat vacuum initial data sufficiently close to that of the Minkowski space. The source of the energy loss is the traceless part of the inward null second fundamental form of the coordinate spheres. In this section, we study the mass loss formula for asymptotically hyperbolic initial data sets under the Einstein equation. By Theorem \ref{limit_vector}, we can evaluate the total energy at infinity of an asymptotically hyperbolic initial data set with respect to a given foliation of coordinate spheres using the limit of the Liu-Yau mass. We first prove the following energy loss formula for a vacuum spacetime.
\begin{theorem}\label{thm_loss}
Let $(M,g,k)$ be an asymptotically hyperbolic initial data set satisfying the vacuum constraint equation. Let $(M,g(t),k(t))$ be the solution to the vacuum Einstein equation with $g(0)=g$ and $k(0)=k$, and with lapse $\sqrt{r^2+1}$ and shift vector $-r e_3$ where $e_3$ is the unit normal vector of the coordinate spheres. Let $\Sigma_{r,t}$ be the coordinate spheres on $(M,g(t),k(t))$. Let 
\[ M(t) = \lim_{r\to \infty} m_{LY}(\Sigma_{r,t}), \]
then
\[ \partial_t M(t)|_{t=0} = \frac{-1}{8 \pi} \int_{S^2} |p^{(0)}_{ab}+ g^{(0)}_{ab}|^2  dS^2.\]
\end{theorem}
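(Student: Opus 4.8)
{\bf Proof proposal.}

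The plan is to track the time derivative of the Liu-Yau mass $m_{LY}(\Sigma_{r,t}) = \frac{1}{8\pi}\int_{\Sigma_{r,t}}(H_0 - |H|)\,d\Sigma_{r,t}$ at $t=0$, pass to the limit $r\to\infty$, and identify the surviving term. Since $M(t)$ is a limit as $r\to\infty$, I would first argue (using the expansions in Lemma \ref{data_expansion} together with the chosen lapse $N=\sqrt{r^2+1}$ and shift $-re_3$) that the differentiation in $t$ and the limit in $r$ may be interchanged, so that $\partial_t M(t)|_{t=0} = \lim_{r\to\infty}\partial_t m_{LY}(\Sigma_{r,t})|_{t=0}$. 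The gauge choice in Remark \ref{gauge_condition} is exactly what makes the coordinate spheres $\Sigma_{r,t}$ sweep out a null-like foliation asymptotically, so the evolution of $(g,k)$ along $(N,-re_3)$ is governed by the standard ADM evolution equations $\partial_t g_{ij} = 2N k_{ij} + (\mathcal{L}_{-re_3}g)_{ij}$ and the corresponding equation for $k$, specialized using the vacuum constraints.

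The key computation is $\partial_t\left(\int_{\Sigma_{r,t}}(H_0-|H|)\,d\Sigma_{r,t}\right)$. I would split this into three contributions: the variation of $H_0$ (the mean curvature of the isometric embedding of $(\Sigma_{r,t},\sigma(t))$ into $\R^3$), the variation of $|H|$ (the norm of the spacetime mean curvature vector of $\Sigma_{r,t}$), and the variation of the area form $d\Sigma_{r,t}$. For the $H_0$ term, since the induced metric $\sigma(t)$ varies, one uses the linearized isometric-embedding/Minkowski-type formula: the integral $\int H_0\,d\Sigma$ depends on $\sigma$ in a way whose first variation is controlled, and to the order that survives in the limit only the variation of $g^{(0)}_{ab}$ (and lower-order coefficients feeding into the mass aspect) matters. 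For the $|H|$ term, I would use $|H|^2 = \langle H,e_3\rangle^2 - \langle H,e_4\rangle^2$ from Lemma \ref{data_expansion} and differentiate each factor, invoking the evolution equations for the second fundamental form and for the mean curvatures. The vacuum constraint equations $\mu = 0$, $\mathfrak{J} = 0$ should be used to eliminate the curvature terms that would otherwise appear, and also in propagating the constraints in $t$ so that $(g(t),k(t))$ remains an asymptotically hyperbolic vacuum initial data set with its own expansion coefficients $g^{(0)}_{ab}(t)$, $p^{(0)}_{ab}(t)$, etc.

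The crux is to show that after all cancellations the only term surviving at order $r^{-3}$ (the order that integrates against $d\Sigma_r \sim r^2$ to give a finite limit) is $-\frac{1}{8\pi}\int_{S^2}|p^{(0)}_{ab}+g^{(0)}_{ab}|^2\,dS^2$. Concretely, $p^{(0)}_{ab}+g^{(0)}_{ab}$ is the traceless leading coefficient of $k_{ab} = g_{ab}+p_{ab}$ relative to $r^2\tilde\sigma_{ab}$, i.e. the analogue of the shear $\underline{\hat\chi}$ in \cite{sta}; the lapse $\sqrt{r^2+1}$ contributes a factor that makes $\partial_t g^{(0)}_{ab}$ proportional to $p^{(0)}_{ab}+g^{(0)}_{ab}$, and the mass aspect $m = \frac{3}{2}tr_{S^2}g^{(-1)}_{ab} + tr_{S^2}p^{(-1)}_{ab} + g^{(-5)}_{rr}$ evolves by a transport equation whose source, after integrating over $S^2$ and discarding total divergences, is exactly $|p^{(0)}_{ab}+g^{(0)}_{ab}|^2$. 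I expect the main obstacle to be the bookkeeping of the subleading expansion coefficients: one must verify that every potentially divergent or finite-but-unwanted term — those coming from $\mathcal{L}_{-re_3}$ acting on the lower-order metric coefficients, from the variation of $H_0$ through the nonlinear dependence on $\sigma$, and from cross terms in $|H|^2$ — either cancels against its counterpart or integrates to zero over $S^2$ as a divergence or as an odd/higher-eigenvalue spherical harmonic. Establishing these cancellations cleanly, ideally by recognizing them as the linearization of the constraint equations or of the Gauss equation on $\Sigma_r$, is where the real work lies; the final positive-definite form is then forced by the structure of the surviving quadratic term.
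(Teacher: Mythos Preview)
Your plan is plausible in outline but takes a genuinely different route from the paper, and the hard step you flag as ``where the real work lies'' is exactly what the paper's argument sidesteps. Rather than differentiating the ADM evolution equations and tracking the mass aspect term by term, the paper introduces the timelike hypersurface $\Omega_s=\bigcup_{0\le t\le s}\Sigma_{r,t}$ swept out by the coordinate spheres, with induced metric $\gamma$, second fundamental form $\Theta$, and conjugate momentum $\pi_{ij}=\Theta_{ij}-(tr_\gamma\Theta)\gamma_{ij}$. Since the spacetime is vacuum, $\pi$ is divergence-free on $\Omega_s$, and the divergence theorem gives
\[
\int_{\Sigma_{r,s}}\pi(V,V)\,d\Sigma_{r,s}-\int_{\Sigma_{r,0}}\pi(V,V)\,d\Sigma_{r,0}=\int_{\Omega_s}\pi^{ij}V_{i,j},
\]
where $V=\sqrt{r^2+1}\,e_4-r\,e_3$ is the unit timelike normal to $\Sigma_{r,t}$ in $\Omega_s$. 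Because $\langle H,V\rangle=O(r^{-2})$ by Lemma~\ref{data_expansion}, the boundary terms equal $\int|H|$ up to $O(r^{-1})$; the variation of $\int H_0$ is handled in one stroke by observing that the area of $\Sigma_{r,t}$ is constant to leading order and invoking the expansion $\int H_0=4\pi r+A/r+O(r^{-1})$ from \cite{Fan-Shi-Tam}. All that remains is to compute the single integrand $\pi^{ab}V_{a,b}$ on $\Sigma_{r,0}$, which reduces immediately to a pairing of the traceless parts $\hat h^{(3)}_{ab}=-g^{(0)}_{ab}+O(r^{-1})$ and $\hat h^{(4)}_{ab}=p^{(0)}_{ab}+O(r^{-1})$ and yields $-|p^{(0)}_{ab}+g^{(0)}_{ab}|^2/r^2$ directly.

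What the paper's approach buys is that the delicate cancellations you anticipate --- cross terms in $|H|^2$, contributions from $\mathcal L_{-re_3}$ on subleading coefficients, the nonlinear dependence of $H_0$ on $\sigma$ --- are all absorbed into the divergence-free condition on $\pi$ and the Fan--Shi--Tam estimate, so no coefficient-by-coefficient bookkeeping is needed. Your approach would presumably arrive at the same place, but you have not actually carried out the cancellations, and your proposal as written is a strategy rather than a proof; in particular the claimed transport equation for the mass aspect with source $|p^{(0)}_{ab}+g^{(0)}_{ab}|^2$ is asserted by analogy with \cite{sta} rather than derived.
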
  
\begin{proof}
Consider the timelike hypersurface
\[  \Omega_s  =  \cup_{0 \le t \le s } \Sigma_{r,t}.\]
Let $V$ be the unit normal vector of $\Sigma_{r,t}$ in  $\Omega_s$. 
We have $V =\sqrt{r^2+1} e_4  -r e_3$ on $\Sigma_{r,t}$ and
\begin{equation} \label{small_deform}
\langle H , V \rangle = O(r^{-2})
 \end{equation} by the calculation in Lemma \ref{data_expansion}.
 Let $\gamma_{ij}$ and  $\Theta_{ij}$ be the metric and second fundamental form of $\Omega_s$ and  
$\pi_{ij}= \Theta_{ij} -( tr _{\gamma} \Theta) \gamma_{ij}$ be the conjugate momentum. Let $\nu$ be the outward unit normal of $\Omega_s$. Let $H(\Sigma_{r,0})$ and $H(\Sigma_{r,s})$ be the mean curvature vector of $\Sigma_{r,0}$ and $\Sigma_{r,s}$, respectively, in the spacetime and let $H_0(\Sigma_{r,0})$ and $H_0(\Sigma_{r,s})$  be the mean curvature of the image of isometric embedding of $\Sigma_{r,0}$ and $\Sigma_{r,s}$, respectively, into $\R^3$.
Integrating by parts,
\[  \int_{\Sigma_{r,s}} \pi(V,V)   d \Sigma_{r,s} =   \int_{\Sigma_{r,0}} \pi(V,V)   d \Sigma_{r,s} + \int_{\Omega_s} \pi_{ij}V_{i,j}. \]
By the definition of the conjugate momentum, 
\[
\begin{split}
 \pi(V,V)|_{\Sigma_{r,0}}  = &\langle H (\Sigma_{r,0}), \nu \rangle \\
 \pi(V,V)|_{\Sigma_{r,s}}  =  & \langle H (\Sigma_{r,s}), \nu \rangle.
\end{split}
\]
From \eqref{small_deform}, we have
\[
\begin{split}
|H(\Sigma_{r,0})| = &\langle H (\Sigma_{r,0}), \nu \rangle  + O(r^{-3})\\
|H(\Sigma_{r,s})| =  & \langle H (\Sigma_{r,s}), \nu \rangle + O(r^{-3})
\end{split}
\]
and thus
\[   \int_{\Sigma_{r,s}} |H(\Sigma_{r,s})| d \Sigma_{r,s} =   \int_{\Sigma_{r,0}}|H(\Sigma_{r,0})|d \Sigma_{r,0} + \int_{\Omega_s} \pi_{ij}V_{i,j}   + O(r^{-1}). \]
From  \eqref{small_deform}, the area  $A(r,s)$ of $\Sigma_{r,s}$ satisfies
\[  A(r,s) = A(r,0) + O(1).\]
From Lemma 2.1 of \cite{Fan-Shi-Tam}, we have
\[   \int_{\Sigma_{r,s}}H_0(\Sigma_{r,s}) d \Sigma_{r,s} =  4 \pi r + \frac{A(r,s)}{r} + O(r^{-1}). \]
We conclude that 
\[   \int_{\Sigma_{r,s}}H_0(\Sigma_{r,s}) d \Sigma_{r,s} =   \int_{\Sigma_{r,0}} H_0(\Sigma_{r,0})  d \Sigma_{r,0}  + O(r^{-1}). \]

It follows that 
\[   m_{LY} (\Sigma_{r,s}) = m_{LY} (\Sigma_{r,0}) +  \frac{1}{8 \pi} \int_{\Omega_s} \pi_{ij}V_{i,j}   + O(r^{-1})   \]
and 
\[ 
\begin{split}
\partial_t  m_{LY} (\Sigma_{r,t}) |_{t=0} 
=  & \frac{1}{8 \pi} \int_{\Sigma_{r,0}} \pi_{ij}V_{i,j}  d \Sigma_{r,0} + O(r^{-1})\\
=  & \frac{1}{8 \pi} \int_{\Sigma_{r,0}} \pi^{ab}V_{a,b}  d \Sigma_{r,0} + O(r^{-1}).\\
\end{split}
\]
Let $h^{(3)}_{ab}$ and  $h^{(4)}_{ab}$ be the second fundamental form of $\Sigma_r$ in the direction of $e_3$ and $e_4$. Furthermore, 
let $\hat  h^{(3)}_{ab}$ and $\hat  h^{(4)}_{ab}$ be their traceless part. 
\[   
\pi^{ab}V_{a,b}  = \pi^{ab}( \sqrt{r^2+1}h^{(4)}_{ab} -  r h^{(3)}_{ab} )  
 \]
since  $\pi$ is symmetric and the symmetrization of $V_{a,b}$ is the second fundamental form of $\Sigma_r$ in the direction of $\sqrt{r^2+1} e_4  -r e_3$.
Equation \eqref{small_deform} implies that 
\[  \sigma^{ab} ( \sqrt{r^2+1}h^{(4)}_{ab} -  r h^{(3)}_{ab} ) = O(r^{-2}).  \]

From the definition of conjugate momentum,
\[   
\begin{split}
     & \pi^{ab}( \sqrt{r^2+1}h^{(4)}_{ab} -  r h^{(3)}_{ab} )  \\
=  & [\sqrt{r^2+1}(h^{(3)})^{ab} -  r (h^{(4)})^{ab} - (tr_{\gamma} \Theta )g^{ab}   ]( \sqrt{r^2+1}h^{(4)}_{ab} -  r h^{(3)}_{ab} ) + O(r^{-3}) \\
=  &  (\sqrt{r^2+1}(\hat h^{(3)})^{ab} -  r (\hat h^{(4)})^{ab}   )( \sqrt{r^2+1}h^{(4)}_{ab} -  r h^{(3)}_{ab} ) + O(r^{-3}) \\
=  &  (\sqrt{r^2+1}(\hat h^{(3)})^{ab} -  r (\hat h^{(4)})^{ab}   )( \sqrt{r^2+1}\hat h^{(4)}_{ab} -  r \hat h^{(3)}_{ab} ) + O(r^{-3}).
\end{split}
\]
From equation \eqref{second_ff}, we have
\[  h^{(3)}_{ab} =  \sqrt{r^2+1 - \frac{g_{rr}^{(-5)}}{r}} r \tilde \sigma_{ab} + O(r^{-1})   . \]
By Definition  \ref{a_h_coordinates}, we have 
\[
\begin{split}
  \sigma_{ab} =&  r^2 \tilde \sigma_{ab}+ g_{ab}^{(0)}+ O(r^{-1})\\
 h^{(4)}_{ab} = &   \sigma_{ab}  + p_{ab}^{(0)} + O(r^{-1})    
\end{split}
\]
where $tr_{S^2}p_{ab}^{(0)}=tr_{S^2}g_{ab}^{(0)} = 0 $.  
 It follows that 
\[
\begin{split}
 \hat h^{(3)}_{ab}  =&  - g_{ab}^{(0)} + O(r^{-1})\\
 \hat h^{(4)}_{ab}  = &  p_{ab}^{(0)}+ O(r^{-1}).
\end{split}
\]
As a result, we have
\[    (\sqrt{r^2+1}(\hat h^{(3)})^{ab} -  r (\hat h^{(4)})^{ab}   )( \sqrt{r^2+1}\hat h^{(4)}_{ab} -  r \hat h^{(3)}_{ab} )= \frac{-| p_{ab}^{(0)}+ g_{ab}^{(0)} |^2}{r^2} +  O(r^{-3}) \]
and 
\[ 
   \partial_t  m_{LY} (\Sigma_{r,t}) |_{t=0} 
= \frac{-1}{8 \pi} \int_{S^2} \frac{|p_{ab}^{(0)}+ g_{ab}^{(0)}|^2}{r^2}  d S^2 + O(r^{-1}).  \]
The theorem follows from taking the limit as $r$ approaches infinity.
\end{proof}
In the remaining part of this section, we generalize the above mass loss formula to spacetime with matter fields. Instead of considering specific matter fields, we require the matter fields to satisfies only the dominant energy condition and the following decay condition for the stress-energy density $T_{\mu \nu}$
\begin{definition} \label{null_condition}
Let $(M,g,k)$ be an asymptotically hyperbolic initial data set in a spacetime $N$ with some matter field. Let $e_3$ and $e_4$ be the unit normal of $\Sigma_r$ in  $M$  and the unit normal of $M$ in $N$. We say that the  stress-energy density $T_{\mu \nu}$ satisfies the null decay condition if
\[
\begin{split}
T(e_3+e_4,e_3+e_4) =  &  O(r^{-3}), \\
T(e_3-e_4,e_3-e_4) =  &  O(r^{-4}). \\
\end{split}
\]
\end{definition}
The null decay condition is natural for the global development of an asymptotically flat initial data set. For example, see \cite{Zipser} for the asymptotics at null infinity for solutions of Einstein-Maxwell equation. In particular, the null decomposition of the the stress-energy density satisfies
\[
\begin{split}
T(L,L) =  &  O(r^{-5}), \\
T(\underline{ L}, \underline{ L}) =  &  O(r^{-2}), \\
\end{split}
\]
and the leading term of $T(\underline{ L}, \underline{ L})$ has an additional contribution to the mass loss formula \cite{Zipser}. By considering the standard hyperbolic space embedded in the Minkowski space (see Remark \ref{gauge_condition}), it is natural to identify $L$ with $\frac{e_3+e_4}{r}$ and $\underline{ L}$ with $r(e_3+e_4)$ and assume the null condition in Definition \ref{null_condition}.

The dominant energy condition implies that both $T(e_3+e_4,e_3+e_4)$ and $T(e_3-e_4,e_3-e_4)$ are non-negative. Let 
\begin{equation}\label{T34} T(e_3-e_4,e_3-e_4) = \frac{F^2}{r^4} + O(r^{-5}). \end{equation}
We have
\begin{theorem}\label{thm_loss2}
Let $(M,g,k)$ be an asymptotically hyperbolic initial data set  satisfying the dominant energy condition equation. Let $(M,g(t),k(t))$ be the solution to the Einstein equation with $g(0)=g$ and $k(0)=k$ with lapse $\sqrt{r^2+1}$ and shift vector $-r e_3$ where $e_3$ is the unit normal vector of the coordinate spheres. 
Let $\Sigma_{r,t}$ be the coordinate spheres on $(M,g(t),k(t))$. Let 
\[ M(t) = \lim_{r\to \infty} m_{LY}(\Sigma_{r,t}). \]
Suppose the stress energy density satisfies both the null condition and the dominant energy condition. Then
\[ \partial_t  M(t)|_{t=0} = \frac{-1}{8 \pi} \int_{S^2} (|p^{(0)}_{ab}+g^{(0)}_{ab}|^2 +F^2 )dS^2 \] where $F$ is given by \eqref{T34}.
\end{theorem}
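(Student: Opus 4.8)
The plan is to follow the same Gauss--Codazzi flux argument used in the proof of Theorem~\ref{thm_loss}, tracking the extra contribution produced by the matter term $\pi_{ij}V_{i,j}$ no longer being a pure divergence. First I would set up the timelike hypersurface $\Omega_s = \cup_{0\le t\le s}\Sigma_{r,t}$ with unit normal $V=\sqrt{r^2+1}\,e_4 - r\,e_3$ exactly as before, and observe that \eqref{small_deform} continues to hold since the gauge (lapse $\sqrt{r^2+1}$, shift $-re_3$) and the asymptotic expansion of Lemma~\ref{data_expansion} are unchanged. The same integration-by-parts identity then gives
\[ m_{LY}(\Sigma_{r,s}) = m_{LY}(\Sigma_{r,0}) + \frac{1}{8\pi}\int_{\Omega_s}\pi_{ij}V_{i,j} + O(r^{-1}), \]
and the key difference is that $\pi_{ij}V_{i,j}$ must now be computed using the full (non-vacuum) constraint/evolution equations. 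I would use the contracted Gauss equation together with the Einstein equation to write the relevant trace of $\pi_{ij}V_{i,j}$ as the vacuum expression plus a term proportional to a component of the Einstein tensor, hence — via $G_{\mu\nu} = 8\pi T_{\mu\nu}$ — proportional to a component of $T_{\mu\nu}$ contracted with $V$. Since $V$ differs from $e_3+e_4$ only to subleading order in $r$ (note $\sqrt{r^2+1}\approx r$), the leading matter contribution to $\partial_t m_{LY}(\Sigma_{r,t})|_{t=0}$ involves $T(e_3+e_4, \cdot)$ or $T(e_3-e_4,e_3-e_4)$ in the relevant combination.

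The second step is to identify which null component of $T$ actually survives. By the same computation that produced $\hat h^{(3)}_{ab} = -g^{(0)}_{ab}+O(r^{-1})$ and $\hat h^{(4)}_{ab} = p^{(0)}_{ab}+O(r^{-1})$, the geometric (vacuum-type) part of the flux integrand is $-|p^{(0)}_{ab}+g^{(0)}_{ab}|^2/r^2 + O(r^{-3})$, reproducing Theorem~\ref{thm_loss}. For the matter part I would use the Hamiltonian-type structure: the relevant combination of Einstein tensor components contracted against $V\otimes V$, after rescaling by the area element $d\Sigma_{r,0}\sim r^2\,dS^2$, picks out precisely $T(e_3-e_4,e_3-e_4)$ to leading order, because the $e_3+e_4$ directions enter with an extra power of decay (the null condition $T(e_3+e_4,e_3+e_4)=O(r^{-3})$ is one order better than $T(e_3-e_4,e_3-e_4)=O(r^{-4})$, mirroring $T(L,L)=O(r^{-5})$ versus $T(\underline L,\underline L)=O(r^{-2})$ after the identification $L\sim(e_3+e_4)/r$, $\underline L\sim r(e_3+e_4)$). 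With $T(e_3-e_4,e_3-e_4)=F^2/r^4+O(r^{-5})$ from \eqref{T34}, the matter contribution to the integrand becomes $-F^2/r^2 + O(r^{-3})$ with the correct sign fixed by the dominant energy condition guaranteeing $F^2\ge 0$.

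Combining the two pieces, $\partial_t m_{LY}(\Sigma_{r,t})|_{t=0} = \frac{-1}{8\pi}\int_{S^2}\big(|p^{(0)}_{ab}+g^{(0)}_{ab}|^2 + F^2\big)/r^2\,dS^2 + O(r^{-1})$, and multiplying by $r^2$ against the area element and letting $r\to\infty$ yields the stated formula. I would also double check that $A(r,s)=A(r,0)+O(1)$ and the Fan--Shi--Tam estimate $\int_{\Sigma_{r,s}}H_0\,d\Sigma_{r,s}=4\pi r + A(r,s)/r + O(r^{-1})$ still apply verbatim, since they only used \eqref{small_deform} and the induced metric expansion, both of which are gauge- and matter-independent at the relevant orders.

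The main obstacle I anticipate is the bookkeeping in the first step: expressing $\pi_{ij}V_{i,j}$ — really its divergence structure on $\Omega_s$ together with the boundary terms — in a form where the Einstein tensor appears explicitly and cleanly, and then verifying that the only matter component that contributes at order $r^{-2}$ (after integration against $d\Sigma_{r,0}$) is $T(e_3-e_4,e_3-e_4)$, with all cross terms and the $T(e_3+e_4,e_3+e_4)$ term genuinely lower order. This requires care with the precise coefficients $\sqrt{r^2+1}$ versus $r$ in $V$ and a clean decomposition of $V_{i,j}$ into its tangential second-fundamental-form part (already handled) and any normal derivative pieces; once that decomposition is pinned down the rest is parallel to Theorem~\ref{thm_loss}.
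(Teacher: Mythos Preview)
Your proposal has a genuine gap in identifying where the matter term enters. The integration-by-parts identity you wrote,
\[ m_{LY}(\Sigma_{r,s}) = m_{LY}(\Igma_{r,0}) + \frac{1}{8\pi}\int_{\Omega_s}\pi_{ij}V_{i,j} + O(r^{-1}), \]
is the \emph{vacuum} version: it has already used $\nabla^i\pi_{ij}=0$, which follows from the Codazzi equation for the timelike hypersurface $\Omega_s$ together with the vanishing of the ambient Ricci tensor. With matter present the full identity is
\[ \int_{\Sigma_{r,s}}\pi(V,V) - \int_{\Sigma_{r,0}}\pi(V,V) = \int_{\Omega_s}\pi_{ij}V_{i,j} + \int_{\Omega_s}(\nabla^i\pi_{ij})V^j, \]
and the Codazzi equation plus the Einstein equation give $(\nabla^i\pi_{ij})V^j = T(V,\nu)$, where $\nu$ is the \emph{spacelike} outward normal of $\Omega_s$. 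This extra term is the sole source of the matter contribution. The quantity $\pi_{ij}V_{i,j}$ itself is purely geometric---it is built from the second fundamental forms $h^{(3)}_{ab}$, $h^{(4)}_{ab}$ of $\Sigma_r$, which are fixed by the asymptotic expansions in Definition~\ref{a_h_coordinates} and do not see the field equations---so it contributes exactly the vacuum term $-|p^{(0)}_{ab}+g^{(0)}_{ab}|^2/r^2$ as before.

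Consequently the relevant matter contraction is $T(V,\nu)$, not $T$ against $V\otimes V$ as you suggest. Writing $V=\sqrt{1+r^2}\,e_4-r\,e_3$ and $\nu=\sqrt{1+r^2}\,e_3-r\,e_4$ in the null frame $e_4\pm e_3$, the cross term cancels exactly and one obtains
\[ T(V,\nu)=\tfrac14\Big[-(\sqrt{1+r^2}+r)^2\,T(e_4-e_3,e_4-e_3)+(\sqrt{1+r^2}-r)^2\,T(e_4+e_3,e_4+e_3)\Big]. \]
The null decay hypotheses then make the first bracket $-F^2/r^2+O(r^{-3})$ and the second $O(r^{-5})$, which pins down both the surviving component and its sign directly; the dominant energy condition plays no role in fixing the sign, only in ensuring $F^2\ge 0$. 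Once you correct the integration-by-parts step in this way, the remaining pieces of your outline (the area estimate, the Fan--Shi--Tam expansion of $\int H_0$, and the computation of the geometric flux) carry over verbatim from Theorem~\ref{thm_loss}.
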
  
\begin{proof}
The proof is similar to that of the  previous theorem. Due to the non-zero stress energy density, we have
\[ 
\begin{split}
    \int_{\Sigma_{r,s}} \pi(V,V)   d \Sigma_{r,s} 
=  & \int_{\Sigma_{r,0}} \pi(V,V)   d \Sigma_{r,s} + \int_{\Omega_s} \pi_{ij}V_{i,j} + \int_{\Omega_s} \nabla^i \pi_{ij}V_{j}   \\
=  & \int_{\Sigma_{r,0}} \pi(V,V)   d \Sigma_{r,s} + \int_{\Omega_s} \pi_{ij}V_{i,j} + \int_{\Omega_s} T(V,\nu)   \\
\end{split} \]
 where $\nu$ is the unit normal of the timelike hypersurface $\Omega_s$. As a result,
\[ 
      \partial_t  m_{LY} (\Sigma_{r,t}) |_{t=0} =
 \frac{1}{8 \pi} \int_{\Sigma_{r,0}} (\pi^{ab}V_{a,b}+  T(V,\nu)  )  d \Sigma_{r,0}     + O(r^{-1})
\]
The first term can be treated as before. For the second term, 
\[ 
\begin{split}
    & T(V,\nu)  \\
=  & T\left(\sqrt{1+r^2}e_4 - r e_3 ,\sqrt{1+r^2}e_3 - r e_4 \right) \\
=&\frac{1}{4} [ - (\sqrt{1+r^2} +r)^2 T(e_4-e_3 , e_4- e_3)  + (\sqrt{1+r^2} - r)^2 T(e_4+e_3, e_4+e_3)  ]\\
=  &  \frac{- F^2}{r^2} + O(r^{-3})
\end{split}
\]
\end{proof}
\section{Finiteness of total angular momentum and center of mass}
Let $(\sigma,|H|,\alpha_H)$ be the induced metric, norm of mean curvature vector and connection one-form in mean curvature gauge on the coordinate spheres $\Sigma_r$. The data admits the same expansion as data of the coordinate spheres at the infinity of an asymptotically flat initial data set of order 1 studied in \cite{Chen-Wang-Yau3}. Thus, we solve the optimal embedding equation and define the total angular momentum and center of mass of $(M,g,k)$ as the limit of quasi-local angular momentum and center of mass on the coordinate spheres in the same manner as in Section 6  of \cite{Chen-Wang-Yau3}.

Denote the expansion of the data $(\sigma,|H|, \alpha_H)$ on the coordinate spheres by  
\begin{equation}
\begin{split}
\sigma = & r^2\tilde \sigma_{ab} + O(r^{-1})\\
|H|= & \frac{2}{r}+ \frac{h^{(-2)}}{r^2}+ \frac{h^{(-3)}}{r^3}+ O(r^{-4})\\
(\alpha_{H})_a= & \frac{(\alpha_H^{(-1)})_a}{r}+ \frac{(\alpha_H^{(-2)})_a}{r^2} + O(r^{-3}).
\end{split}
\end{equation}

From \cite{Chen-Wang-Yau2} and \cite{Chen-Wang-Yau3} for the solution of the optimal embedding equation, we have the following expansion for the solution $(X(r),T_0(r))$ of the optimal embedding equation
\begin{equation} 
\begin{split}
X(r)^0= & (X^0)^{(0)} + O(r^{-1})\\
X(r)^i = & r\tilde X^i+(X^i)^{(0)}+\frac{(X^i)^{(-1)}}{r}+ O(r^{-2})\\
T_0(r) = & (a^0,a^1,a^2,a^3) + \frac{T_0^{(-1)}}{r}+  O(r^{-2}).
\end{split}
\end{equation}
We have corresponding expansions for the data of the image of the isometric embedding
\begin{equation}
\begin{split}
|H_0|= & \frac{2}{r}+ \frac{h_0^{(-2)}}{r^2}+ \frac{h_0^{(-3)}}{r^3}+ O(r^{-4})\\
(\alpha_{H_0})_a= & \frac{(\alpha_{H_0}^{(-1)})_a}{r}+ \frac{(\alpha_{H_0}^{(-2)})_a}{r^2} + O(r^{-3}),
\end{split}
\end{equation}
and the expansion for $\rho$ as defined in equation \eqref{rho}
\[  \rho = \frac{\rho^{(-2)}}{r^2}+ \frac{\rho^{(-3)}}{r^3}+ O(r^{-4}).  \]
The  metric expansion implies that $h_0^{(-2)}=0$,  $(X^i)^{(0)} =0$, and
$ \rho^{(-2)} = \frac{- h^{(-2)} }{a^0}. $

The leading order term of the optimal embedding equation (see for example equation (7.4) of \cite{Chen-Wang-Yau3}) implies that  $(X^0)^{(0)}$ solves the following equation 
\begin{equation}
\begin{split}
\frac{1}{2}\tilde \Delta (\tilde \Delta +2) (X^0)^{(0)} 
= & div_{S^2} \alpha_H^{(-1)}-  \sum_{i=1}^3 a^i \left[\tilde \Delta(\frac{\rho^{(-2)} \tilde X^i}{2}) +div_{S^2} \rho^{(-2)}(\tilde \nabla \tilde X^i)\right]\\
 = &  -\frac{1}{2} \tilde \Delta h^{(-2)} + \sum_{i=1}^3 \frac{a^i}{a^0} \left [div_{S^2} (h^{(-2)}\tilde \nabla \tilde X^i) + \tilde \Delta(\frac{h^{(-2)}\tilde X^i}{2})\right ].
\end{split}
\end{equation} 
This is solvable if and only if $(a^0,a^i)$ is the future directed unit timelike vector in the direction of the total energy momentum.

The total center of mass and angular momentum of $(M,g,k)$ is defined as follows.
\begin{definition}\label{total_conserved}
Suppose $ T_0({r})=A(r)(\frac{\partial}{\partial x^0})$ for a family of Lorentz transformation $A(r)$.  Define
\begin{equation}
C^i  =  \lim_{r \to \infty} E(\Sigma_r, X(r), T_0(r), A({r})(x^i\frac{\partial}{\partial x^0}+x^0\frac{\partial}{\partial x^i}))\\
\end{equation} to be the total center of mass integral and 
\begin{equation}
J_{i} =\lim_{r \to \infty} \epsilon_{ijk}E(\Sigma_r , X(r), T_0(r), A({r})(x^j\frac{\partial}{\partial x^k}-x^k\frac{\partial}{\partial x^j})) 
\end{equation} to be the total angular momentum,
where  $\Sigma_r$ are the coordinate spheres and $(X(r), T_0({r}))$ is the unique family of optimal isometric embeddings of $\Sigma_r$ such that $X(r)$ converges to a round sphere of radius $r$ in $\R^3$ when $r\rightarrow \infty$.
\end{definition}

In \cite{Chen-Wang-Yau3}, the authors studied the limit of quasi-local conserved quantities at the infinity of asymptotically flat initial data sets of order 1 and proved the following theorem concerning finiteness of total conserved quantities:
\begin{proposition}[Proposition7.1, \cite{Chen-Wang-Yau3}]
\label{proposition7.1}
The total center of mass and total angular momentum are finite if  
\begin{equation}\label{finite_condition}
\begin{split}
\int_{S^2} \tilde X^i \rho^{(-2)} dS^2 =0\text{ and } \int_{S^2}  \tilde X^i \left( \tilde{\epsilon}^{ab}\tilde{\nabla}_b(\alpha_H^{(-1)})_a \right) dS^2=0.
\end{split}
\end{equation}
\end{proposition}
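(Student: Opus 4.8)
The plan is to reduce the finiteness of the total center of mass and angular momentum to the two conditions in \eqref{finite_condition} by directly examining the leading order expansions of the quasi-local conserved quantity \eqref{qlcq2} under the rotation and boost Killing fields, exactly as was done in \cite{Chen-Wang-Yau3} for asymptotically flat order-1 data. Since the remark after Theorem \ref{limit_vector} already records that the triple $(\sigma, |H|, \alpha_H)$ on the coordinate spheres $\Sigma_r$ has the same form of expansion as the corresponding data in \cite{Chen-Wang-Yau1, Chen-Wang-Yau3}, and since the optimal isometric embedding equation and its solution $(X(r), T_0(r))$ admit identical expansions, the argument of \cite{Chen-Wang-Yau3} transfers verbatim. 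Concretely, I would substitute the expansions of $X(r)$, $T_0(r)$, $\rho$, and $j_a$ into \eqref{qlcq2} with $K$ a rotation or boost Killing field, and collect the terms that could obstruct convergence of the integral as $r \to \infty$.

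The first step is to record how a rotation Killing field $A(r)(x^j \partial_{x^k} - x^k \partial_{x^j})$ and a boost Killing field $A(r)(x^i \partial_{x^0} + x^0 \partial_{x^i})$ decompose into normal and tangential parts along $X(r)(\Sigma_r)$, using that $X^i(r) = r\tilde X^i + O(r^{-1})$ and $X^0(r) = (X^0)^{(0)} + O(r^{-1})$. For a rotation field, $\langle K, T_0 \rangle$ is $O(1)$ and $K^\top$ is $O(r)$, so the potentially divergent contribution to $E(\Sigma_r, X(r), T_0(r), K)$ comes from $\int_{\Sigma_r} j(K^\top)\, d\Sigma_r$; using \eqref{j_a} and $d\Sigma_r \sim r^2\, dS^2$, the leading piece is governed by $(\alpha_H^{(-1)})_a$ paired against the rotational vector fields on $S^2$, which after integration by parts is precisely $\int_{S^2} \tilde X^i \tilde\epsilon^{ab} \tilde\nabla_b (\alpha_H^{(-1)})_a\, dS^2$. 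For a boost field, $\langle K, T_0 \rangle$ grows like $r$, so the term $\int_{\Sigma_r} \langle K, T_0\rangle \rho\, d\Sigma_r$ is the one to watch; its leading behavior is controlled by $\rho^{(-2)}$ tested against the $-2$ eigenfunctions $\tilde X^i$, giving $\int_{S^2} \tilde X^i \rho^{(-2)}\, dS^2$. Verifying that all other terms are either convergent or cancel — in particular checking that the $(X^0)^{(0)}$ correction, which solves the displayed fourth-order equation, does not reintroduce a divergence — is the technical heart of the matching.

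I would then invoke the fact, already noted in the excerpt, that $(X^0)^{(0)}$ is determined by the solvability of the optimal embedding equation, which forces $(a^0, a^i)$ to be aligned with the total energy-momentum vector; this is consistent with the hypothesis in Corollary \ref{cor_vanish_momentum} that the energy-momentum is timelike, so such a foliation exists. With the leading divergent terms isolated as above, the statement of Proposition \ref{proposition7.1} follows: the total center of mass and total angular momentum are finite precisely when $\int_{S^2}\tilde X^i \rho^{(-2)}\, dS^2 = 0$ and $\int_{S^2}\tilde X^i(\tilde\epsilon^{ab}\tilde\nabla_b(\alpha_H^{(-1)})_a)\, dS^2 = 0$.

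The main obstacle I anticipate is not any single computation but the bookkeeping required to confirm that the lower-order terms in the expansions of $\rho$, $j_a$, and the Killing fields conspire to leave only the two displayed obstructions — in other words, showing that the $O(r^{-3})$ coefficients $\rho^{(-3)}$, $(\alpha_H^{(-2)})_a$, $h^{(-3)}$, together with the $O(r^{-1})$ corrections to $X(r)$ and $T_0(r)$, contribute only finite (or vanishing) quantities after integration over $S^2$. Since \cite{Chen-Wang-Yau3} carried out exactly this analysis for data with the same expansion structure, the cleanest route is to cite that work and merely verify that the asymptotically hyperbolic coordinate spheres produce data of the required form, which is the content of Lemma \ref{data_expansion} and the remark following Theorem \ref{limit_vector}.
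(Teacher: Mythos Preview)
Your proposal is correct and mirrors the paper's approach: this proposition is not proved in the present paper but is imported from \cite{Chen-Wang-Yau3}, and the paper's only justification is the sentence immediately following the statement, namely that $(\sigma,|H|,\alpha_H)$ on the asymptotically hyperbolic coordinate spheres has expansions of the same form, so Proposition~7.1 of \cite{Chen-Wang-Yau3} applies verbatim. Your sketch of how the argument in \cite{Chen-Wang-Yau3} runs---isolating the $O(r)$ growth of $\langle K,T_0\rangle$ for boosts against $\rho^{(-2)}$, and the $O(r)$ growth of $K^\top$ for rotations against $(\alpha_H^{(-1)})_a$---is accurate; two small corrections are that $\langle K_{ij},T_0(r)\rangle$ is in fact identically zero (not merely $O(1)$) since $T_0(r)=A(r)(\partial_{x^0})$ and $K_{ij}=A(r)(x^j\partial_{x^k}-x^k\partial_{x^j})$ are orthogonal, and that the proposition asserts only sufficiency, not ``precisely when.''
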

Proposition 7.1 of \cite{Chen-Wang-Yau3} can be applied to the limit of quasi-local conserved quantities at infinity of  asymptotically hyperbolic initial data since $(\sigma, |H|, \alpha_H)$ has expansions of the same forms. 

In addition, the authors proved in \cite{Chen-Wang-Yau3} that equation  \eqref{finite_condition} follows from the vacuum constraint equation for asymptotically flat initial data sets of order 1. We prove in this section that, for coordinate spheres in asymptotically hyperbolic initial data sets, equation \eqref{finite_condition} does not follow from  the vacuum constraint equation. Instead, it holds only for the foliation with vanishing linear momentum.
\begin{proposition}\label{prop_finite_condition}
Let $\Sigma_r$ be a family of coordinate spheres in an asymptotically hyperbolic initial data sets. Then 
\begin{equation}
\begin{split}
\int_{S^2} \tilde X^i \rho^{(-2)} dS^2 =0\text{ and } \int_{S^2}  \tilde X^i \left( \tilde{\epsilon}^{ab}\tilde{\nabla}_b(\alpha_H^{(-1)})_a \right) dS^2=0
\end{split}
\end{equation}
holds if and only if the linear momentum vanishes. 
\end{proposition}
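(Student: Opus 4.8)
The plan is to compute the two quantities $\int_{S^2}\tilde X^i\rho^{(-2)}\,dS^2$ and $\int_{S^2}\tilde X^i\bigl(\tilde\epsilon^{ab}\tilde\nabla_b(\alpha_H^{(-1)})_a\bigr)\,dS^2$ explicitly in terms of the expansion coefficients appearing in Definition \ref{a_h_coordinates}, and then recognize the resulting expressions as (multiples of) the linear momentum $P^i_{AH}$. First I would read off from Lemma \ref{data_expansion} that $|H|=\frac{2}{r}-\frac{m}{r^2}+O(r^{-3})$, so $h^{(-2)}=-m$, and that $(\alpha_H)_a=\frac{\partial_a m}{2r}+O(r^{-2})$, so $(\alpha_H^{(-1)})_a=\tfrac12\tilde\nabla_a m$, where $m$ is the mass aspect function of Definition \ref{aspect}. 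Since $(\alpha_H^{(-1)})_a$ is exact, its curl $\tilde\epsilon^{ab}\tilde\nabla_b(\alpha_H^{(-1)})_a$ vanishes identically; hence the second condition in \eqref{finite_condition} holds automatically for coordinate spheres, regardless of the momentum. So the real content is the first condition.

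For the first condition, I would use the identity $\rho^{(-2)}=-h^{(-2)}/a^0$ stated in the excerpt (which comes from comparing $|H|$ with $|H_0|$ and using $h_0^{(-2)}=0$), giving $\rho^{(-2)}=m/a^0$. For the reference foliation of coordinate spheres whose isometric embedding converges to a round sphere in $\R^3$, the relevant observer is $T_0=(1,0,0,0)$ to leading order, i.e.\ $a^0=1$, $a^i=0$, so $\rho^{(-2)}=m$ on the nose. Therefore
\[
\frac{1}{8\pi}\int_{S^2}\tilde X^i\rho^{(-2)}\,dS^2=\frac{1}{8\pi}\int_{S^2}\tilde X^i m\,dS^2 = P^i_{AH}
\]
by Definition \ref{definition}. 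This shows $\int_{S^2}\tilde X^i\rho^{(-2)}\,dS^2=0$ if and only if $P^i_{AH}=0$, which is exactly the claim. The ``only if'' direction is immediate from the same computation; the ``if'' direction just reverses it.

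The main subtlety I anticipate is keeping the bookkeeping of which foliation (equivalently, which observer $(a^0,a^i)$) one is working with consistent: the statement concerns ``a family of coordinate spheres'' with its canonically associated optimal isometric embedding, and one must confirm that for this family the leading observer is indeed timelike-unit with $a^0=1$ so that $\rho^{(-2)}=h^{(-2)}$ with no stray factor. This is where the hypothesis enters that $X(r)$ converges to a round sphere of radius $r$ in $\R^3$ (Definition \ref{total_conserved}), forcing $(X^i)^{(0)}=0$ and pinning down the leading term of the embedding and observer. Once that normalization is fixed, the identification of $\int\tilde X^i\rho^{(-2)}$ with $P^i_{AH}$ is a direct comparison of Definition \ref{aspect} and Definition \ref{definition}, and the curl term is handled by the exactness observation above; no hard estimates are needed beyond the expansions already established in Lemma \ref{data_expansion}.
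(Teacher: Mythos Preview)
Your approach is essentially the paper's: use Lemma~\ref{data_expansion} to get $h^{(-2)}=-m$ and $(\alpha_H^{(-1)})_a=\tfrac12\tilde\nabla_a m$, observe that the curl of a gradient vanishes so the second condition is automatic, and then identify $\int_{S^2}\tilde X^i\rho^{(-2)}\,dS^2$ with a nonzero multiple of $\int_{S^2}\tilde X^i m\,dS^2=8\pi P^i_{AH}$.

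One correction on the point you yourself flag as the main subtlety: the assertion $a^0=1$ is not justified by ``$X(r)$ converges to a round sphere in $\R^3$''. For the optimal isometric embedding the limit observer $(a^0,a^i)$ is the unit vector in the direction of $(E_{AH},P^i_{AH})$, so $a^0=E_{AH}/M_{AH}$ in general; this is why the paper records $\int_{S^2}\tilde X^i\rho^{(-2)}\,dS^2$ as proportional to $P^i_{AH}/E_{AH}$ rather than to $P^i_{AH}$. Your justification is in fact circular --- $a^0=1$ holds precisely when $P^i_{AH}=0$, which is what you are trying to characterize. Fortunately this does not damage the argument: since $a^0>0$ always, $\rho^{(-2)}=m/a^0$ already gives $\int_{S^2}\tilde X^i\rho^{(-2)}\,dS^2=0\iff P^i_{AH}=0$ without ever needing the value of $a^0$. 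Just drop the claim $a^0=1$ and the proof goes through exactly as in the paper.
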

\begin{proof}
Recall from Lemma \ref{data_expansion}, we have
\begin{equation}
\begin{split}
h^{(-2)} =  - m \text{ and } (\alpha_H^{(-1)})_a =  \frac{1}{2}\partial_a m,
\end{split}
\end{equation} where $m$ is the mass aspect function. By the expansion of the induced metric on $\Sigma_r $ in Lemma \ref{data_expansion}, 
\[ h_0^{(-2) } =0 .  \]
It follows immediately that 
\[   \int_{S^2} \tilde X^i \left( \tilde{\epsilon}^{ab}\tilde{\nabla}_b(\alpha_H^{(-1)})_a \right) dS^2=0\]
holds and  
\[ \int_{S^2} \tilde X^i \rho^{(-2)} dS^2 = \frac{P^i_{AH}}{E_{AH}}. \]
\end{proof}
\begin{corollary}
For an asymptotically hyperbolic initial data set with timelike energy-momentum vector, the total angular momentum and center of mass associated with any foliation
with vanishing linear momentum are finite.
\end{corollary}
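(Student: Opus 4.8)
The plan is to obtain the corollary by assembling three results already established in this section, with essentially no new computation: the existence of a vanishing-momentum foliation, the finiteness criterion of Proposition \ref{proposition7.1}, and the equivalence of Proposition \ref{prop_finite_condition}.

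First I would use the timelike hypothesis on the energy-momentum 4-vector together with Corollary \ref{cor_vanish_momentum} to produce a foliation with vanishing linear momentum, namely the one whose boost parameter $(a^0,a^i)$ is aligned with the direction of the total energy-momentum vector. This is precisely the regime in which the leading-order optimal embedding equation for $(X^0)^{(0)}$ is solvable, so the unique family of optimal isometric embeddings $(X(r),T_0(r))$ of Definition \ref{total_conserved} — converging to round spheres of radius $r$ in $\R^3$ — is well-defined for this foliation, and the limits $C^i$ and $J_i$ are at least meaningful as formal limits.

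Next I would invoke the observation, recorded immediately before Proposition \ref{proposition7.1}, that on coordinate spheres of an asymptotically hyperbolic initial data set the data $(\sigma, |H|, \alpha_H)$ admits expansions of exactly the same form as those of an asymptotically flat initial data set of order $1$. Consequently Proposition 7.1 of \cite{Chen-Wang-Yau3}, restated here as Proposition \ref{proposition7.1}, applies unchanged: the total center of mass and total angular momentum are finite as soon as the two integral conditions in \eqref{finite_condition} are satisfied. It then remains only to check that \eqref{finite_condition} holds for the chosen foliation, and this is exactly the conclusion of Proposition \ref{prop_finite_condition}, which shows that both integrals vanish if and only if the linear momentum vanishes. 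Since the foliation from the first step has vanishing linear momentum by construction, both conditions hold, and finiteness follows.

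The main point requiring care is logical rather than computational. One must confirm that the hypotheses of Proposition \ref{proposition7.1} are genuinely met in the hyperbolic setting — that is, that the matching of expansion forms between the two settings is close enough that the argument of \cite{Chen-Wang-Yau3} transfers verbatim — and that the vanishing-momentum foliation furnished by Corollary \ref{cor_vanish_momentum} is the same foliation whose optimal isometric embeddings converge to round spheres, as required in Definition \ref{total_conserved}. Once these compatibilities are verified, the corollary is an immediate consequence of the cited propositions.
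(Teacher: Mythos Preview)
Your proposal is correct and follows the same approach as the paper: the paper's proof simply states that the corollary follows from combining Corollary \ref{cor_vanish_momentum}, Proposition \ref{proposition7.1}, and Proposition \ref{prop_finite_condition}, which is exactly the chain of implications you describe. The additional care you take in verifying the compatibility of the hyperbolic expansions with the hypotheses of Proposition \ref{proposition7.1} is already addressed in the paper by the remark immediately following that proposition.
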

\begin{proof}
The corollary follows from combining Corollary \ref{cor_vanish_momentum}, Proposition \ref{proposition7.1}, and Proposition \ref{prop_finite_condition}.
\end{proof}
\section{Evaluating total center of mass and angular momentum.}
Let $(M,g,k)$ be an asymptotically hyperbolic initial data set. Suppose a foliation $\Sigma_r$ has vanishing linear momentum, we evaluate its  total center of mass and angular momentum. We first evaluate the total center of mass and angular momentum in terms of the expansion of $(\sigma,|H|,\alpha_H)$ and $(|H_0|,\alpha_{H_0} )$. Then we express them in terms of the expansion for $g$ and $k$. 

Let $(X(r), T_0(r))$  be the solution of the optimal embedding equation  on $\Sigma_r$. The solution takes a simpler form since the linear momentum vanishes.
In particular, we have
\begin{equation}  \label{eq_expansion_optimal}
\begin{split}
X^0(r) = & (X^0)^{(0)} + O(r^{-1})\\
X^i (r)= & r\tilde X^i+\frac{(X^i)^{(-1)}}{r}+ O(r^{-2})\\
T_0(r) = & (1,0,0,0) + \frac{T_0^{(-1)}}{r}+  O(r^{-2})
\end{split}
\end{equation}
where $(X^0)^{(0)}$ satisfies the equation  
\begin{equation}\label{optimal_eq} \frac{1}{2}\tilde \Delta (\tilde \Delta +2) (X^0)^{(0)} =-\frac{1}{2} \tilde \Delta h^{(-2)}.\end{equation} 
Let $T_0^{(-1)} = (0,b^1,b^2,b^3)$. The time function $\tau= - X(r) \cdot T_0(r)$ has the following expansion
\begin{equation}\label{tau_expansion} \tau=  (X^0)^{(0)} -  \sum_i b^i \tilde X^i  + O(r^{-1}).\end{equation}

\begin{proposition}
The total center of mass and angular momentum of an asymptotically hyperbolic initial data set are
\begin{equation}
\begin{split}
C^i =& \frac{1}{8 \pi} \int_{S^2}  \tilde X^i  (h_0^{(-3)}- h^{(-3)}) \ dS^2 \\
J^i =& \frac{-1}{8 \pi} \int_{S^2} \tilde X^i   \epsilon^{ab}  \tilde \nabla_b (\alpha_H^{(-2)})_a  dS^2. 
\end{split}
\end{equation}
\end{proposition}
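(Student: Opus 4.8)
The plan is to insert the expansions \eqref{eq_expansion_optimal} and \eqref{tau_expansion} into the formula $E(\Sigma_r, X, T_0, K)=\frac{-1}{8\pi}\int_\Sigma[\langle K,T_0\rangle\rho + j(K^\top)]d\Sigma$ and extract the finite limit. First I would record that, because the linear momentum vanishes, $T_0(r)=(1,0,0,0)+O(r^{-1})$, so the Killing fields entering $C^i$ and $J^i$ are (to leading order) the standard boost $x^i\partial_0+x^0\partial_i$ and rotation $x^j\partial_k-x^k\partial_j$ of $\R^{3,1}$, with $A(r)$-corrections of order $r^{-1}$; I would check that those corrections do not contribute in the limit (they pair against quantities that are already $O(r^{-2})$ after multiplying by $d\Sigma_r\sim r^2$, or integrate to zero by parity). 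The main computational input is the expansion $\rho=\frac{\rho^{(-2)}}{r^2}+\frac{\rho^{(-3)}}{r^3}+O(r^{-4})$ together with $j_a$ from \eqref{j_a}; the area form is $d\Sigma_r=r^2\,dS^2+O(1)$, so only the $r^{-2}$ and $r^{-3}$ coefficients of the integrand survive.

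For the center of mass, $\langle K,T_0\rangle$ with $K=x^i\partial_0+x^0\partial_i$ evaluated on $X(r)$ gives, to leading order, $\langle K, T_0\rangle = X^i(r) + O(1) = r\tilde X^i + O(1)$ (using $T_0\approx\partial_0$), so $\langle K,T_0\rangle\rho\, d\Sigma_r = r\tilde X^i\cdot\frac{\rho^{(-2)}}{r^2}\cdot r^2\,dS^2 + (\text{order }r^0\text{ terms}) = r\tilde X^i\rho^{(-2)}dS^2 + \ldots$. The divergent term $r\tilde X^i\rho^{(-2)}$ integrates to zero precisely by the finiteness condition $\int_{S^2}\tilde X^i\rho^{(-2)}dS^2 = 0$ from Proposition \ref{prop_finite_condition} (which holds since the linear momentum vanishes). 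What remains at order $r^0$ assembles into $\int_{S^2}\tilde X^i(h_0^{(-3)}-h^{(-3)})dS^2$ after one uses $\rho^{(-2)}=-h^{(-2)}/a^0 = -h^{(-2)}$ (here $a^0=1$), the relation between $\rho^{(-3)}$ and $h^{(-3)}, h_0^{(-3)}$, the contributions of $(X^i)^{(-1)}$ and the $b^i$, and crucially the equation \eqref{optimal_eq} satisfied by $(X^0)^{(0)}$ to cancel the $\tau$-dependent and $j_a\nabla^a\tau$ cross terms (the $j_a\nabla^a\tau$ piece is $O(r^{-4})\cdot r^2$, hence negligible once $j=O(r^{-2})$, but $\rho\langle K,T_0\rangle$ picks up $\tau$-dependence through the boost's $x^0$-component and through $T_0^{(-1)}$). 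This bookkeeping is essentially the asymptotically flat computation of Section 7 of \cite{Chen-Wang-Yau3}, which applies verbatim because $(\sigma,|H|,\alpha_H)$ has the same form of expansion; I would invoke that and only highlight the one simplification, namely that $h_0^{(-2)}=0$ and $(X^i)^{(0)}=0$.

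For the angular momentum, $K=x^j\partial_k-x^k\partial_j$ is tangent to the round sphere to leading order, so $\langle K,T_0\rangle=O(1)$ rather than $O(r)$, and the $\rho$-term contributes $\langle K,T_0\rangle\rho\,d\Sigma_r = O(1)\cdot O(r^{-2})\cdot r^2 = O(1)$; in fact $\langle K, T_0\rangle$ is $O(r^{-1})$ since $K$ is purely spatial and tangential and $T_0\approx\partial_0$, so this term vanishes in the limit. The surviving term is $-\frac{1}{8\pi}\int j(K^\top)d\Sigma_r$, and since $K^\top$ is the rotation vector field $\tilde\epsilon^{ab}\tilde\nabla_b\tilde X^i$ on $S^2$ (up to lower order), $\int j(K^\top)d\Sigma_r = \int j_a\cdot r^2\tilde\epsilon^{ab}\tilde\nabla_b\tilde X^i\,dS^2$. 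Writing $j_a = \frac{(j^{(-2)})_a}{r^2}+O(r^{-3})$ and noting $(j^{(-2)})_a = (\alpha_H^{(-2)})_a - (\alpha_{H_0}^{(-2)})_a$ plus a gradient term (from \eqref{j_a}, the $\rho\nabla_a\tau$ and $\sinh^{-1}$ pieces contribute exact differentials or lower-order terms that die after pairing with the divergence-free rotation field), an integration by parts gives $J^i = \frac{-1}{8\pi}\int_{S^2}\tilde X^i\tilde\epsilon^{ab}\tilde\nabla_b(j^{(-2)})_a\,dS^2 = \frac{-1}{8\pi}\int_{S^2}\tilde X^i\tilde\epsilon^{ab}\tilde\nabla_b(\alpha_H^{(-2)})_a\,dS^2$, where the $\alpha_{H_0}^{(-2)}$ and gradient contributions drop because $\tilde\epsilon^{ab}\tilde\nabla_b$ of a gradient vanishes and the reference connection one-form for an embedding into the totally geodesic $\R^3$ has vanishing curl at this order. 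The main obstacle is organizing the order-$r^0$ terms in the center of mass computation without error — keeping track of how $(X^0)^{(0)}$, $T_0^{(-1)}=(0,b^i)$, $(X^i)^{(-1)}$, and the difference $h_0^{(-3)}-h^{(-3)}$ conspire — but this is exactly the calculation already carried out in \cite{Chen-Wang-Yau3}, so I would cite it rather than reproduce it, and then in the next proposition translate $h_0^{(-3)}-h^{(-3)}$ and $\alpha_H^{(-2)}$ into the metric coefficients $g_{ab}^{(-2)}, g_{rr}^{(-6)}, g_{ra}^{(-3)}, p_{ab}^{(-2)}, p_{ra}^{(-3)}$ via Lemma \ref{data_expansion}-type expansions carried to one higher order.
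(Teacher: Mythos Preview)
Your center-of-mass sketch is on the right track, but the paper's argument is cleaner than the bookkeeping you anticipate. Once you verify $j_a^{(-1)} = -(\alpha_{H_0}^{(-1)})_a + (\alpha_H^{(-1)})_a = 0$ directly from the optimal embedding equation \eqref{optimal_eq}, and observe that $\langle K_{0i}, T_0(r)\rangle = -r\tilde X^i + O(r^{-1})$ has \emph{no} $O(1)$ term (because $(X^i)^{(0)} = 0$), the $j(K_{0i}^\top)$ contribution is already $O(r^{-1})$ and the $\rho$ term gives $\tilde X^i\rho^{(-3)} = \tilde X^i(h_0^{(-3)} - h^{(-3)})$ after the divergent piece is killed by vanishing momentum. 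No appeal to $(X^i)^{(-1)}$, $b^i$, or the $x^0$-component of the boost is needed.

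For the angular momentum there are two genuine gaps. First, you assert that the $\rho\nabla_a\tau$ piece of $j_a$ contributes only an exact differential at order $r^{-2}$. This is false: that term is $\rho^{(-2)}\tilde\nabla_a\tau^{(0)} = -h^{(-2)}\tilde\nabla_a\bigl((X^0)^{(0)} - \sum_k b^k\tilde X^k\bigr)$, and since $h^{(-2)} = -m$ is nonconstant it is not a gradient. The paper shows this term integrates to zero against the rotation field by a nontrivial argument: writing $L^l(f) = \tilde\epsilon^{ab}\tilde\nabla_b\tilde X^l\,\tilde\nabla_a f$, using from \eqref{optimal_eq} that $(\tilde\Delta+2)(X^0)^{(0)} + h^{(-2)}$ is constant, and exploiting $[\tilde\Delta, L^l] = 0$ together with the Leibniz rule and $\int_{S^2} L^l(f)\,dS^2 = 0$ to deduce $\int_{S^2} L^l(\tau^{(0)})(\tilde\Delta+2)(X^0)^{(0)}\,dS^2 = 0$ via a symmetrization. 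Second, you dispose of $(\alpha_{H_0}^{(-2)})_a$ by saying the reference embedding lands in a totally geodesic $\R^3$; but the optimal embedding has $X^0 = (X^0)^{(0)} \neq 0$, so the image sits genuinely in $\R^{3,1}$ and there is no reason for $\alpha_{H_0}$ to be closed. The paper does not compute $\alpha_{H_0}^{(-2)}$ at all: it rewrites the angular-momentum integrand as the difference of a reference piece (built from $|H_0|$, $\alpha_{H_0}$, $\tau$) and a physical piece (built from $|H|$, $\alpha_H$, $\tau$), and shows the entire reference piece is $O(r^{-1})$ by a conservation-law argument --- applying the divergence theorem to the conjugate-momentum current $\pi(K^{\mathcal C}, \cdot)$ on the timelike cylinder in $\R^{3,1}$ swept out by $X(r)(\Sigma_r)$ along $T_0(r)$.
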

\begin{proof}
Since $\tau=O(1)$,
\begin{equation}
\rho =  |H_0| - |H| + O(r^{-4}).
\end{equation}
Recall that,
\[ j_a = \rho {\nabla_a \tau }- \nabla_a [ \sinh^{-1} (\frac{\rho\Delta \tau }{|H_0||H|})]-(\alpha_{H_0})_a + (\alpha_{H})_a   \]
and 
\begin{equation}
\begin{split}
   j_a^{(-1)} =  & -(\alpha_{H_0})^{(-1)}_a + (\alpha_{H})^{(-1)}_a  \\
=  & \frac{1}{2} \left[-\tilde \nabla_a  (\tilde \Delta  +2)  (X^0)^{(0)} - \tilde \nabla h^{(-2)} \right]  =0,
\end{split}
\end{equation} by the optimal isometric embedding equation. 
The Killing vector field associated with the center of mass is  $K_{0i}=A({r})(x^i\frac{\partial}{\partial x^0}+x^0\frac{\partial}{\partial x^i})$
with $A(r) = Id + O(r^{-1})$. This implies that  $  (K_{0i}^\top)_a = O(r) $.
We also have
\[   \langle  K_{0i} , T_0(r) \rangle = -r \tilde  X^i + O(r^{-1}).\]
It follows that 
\begin{equation}
\begin{split}
    & E(\Sigma_r, X(r), T_0(r), A({r})(x^i\frac{\partial}{\partial x^0}+x^0\frac{\partial}{\partial x^i}))\\
= &  \frac{-1}{8 \pi}\int_{\Sigma_r}\left[ \langle  K_{0i} , T_0(r) \rangle \rho +( K_{0i}^\top)_a j^a \right] d\Sigma_r \\ 
= & \frac{1}{8 \pi} \int_{S^2} (h_0^{(-3)}- h^{(-3)}) \tilde X^i dS^2 +O(r^{-1}).
\end{split}
\end{equation}
For the angular momentum, the corresponding Killing vector field is  $K_{ij}=A({r})(x^i\frac{\partial}{\partial x^j}-x^j\frac{\partial}{\partial x^i})$. 
We have $ \langle  K_{ij} , T_0(r) \rangle =0$. Hence
\begin{equation}\label{angular}
\begin{split}
    & E(\Sigma_r, X(r), T_0(r), A({r})(x^i\frac{\partial}{\partial x^j}-x^j\frac{\partial}{\partial x^i}))\\
= & \frac{(-1)}{8\pi} \int_{\Sigma_r}
K_{ij}^\top\cdot \left (  \frac{\sqrt{(1+|\nabla \tau|^2)|H_0|^2 +(\Delta \tau)^2}\nabla \tau }{(1+|\nabla \tau|^2)} - \nabla[ \sinh^{-1} (\frac{\Delta \tau }{|H_0| \sqrt{1+|\nabla \tau|^2}})]-\alpha_{H_0}\right)d\Sigma_r  \\
& +  \frac{1}{8\pi} \int_{\Sigma_r}
K_{ij}^\top\cdot \left(  \frac{\sqrt{(1+|\nabla \tau|^2)|H|^2 +(\Delta \tau)^2}\nabla \tau }{(1+|\nabla \tau|^2)} - \nabla[ \sinh^{-1} (\frac{\Delta \tau }{|H| \sqrt{1+|\nabla \tau|^2}})] -\alpha_{H} \right )d\Sigma_r.
\end{split}
\end{equation}
We prove that the first integral is $O(r^{-1})$ by applying the conservation law to the image of the isometric embedding $X({r})$ of $\Sigma_r$, similar to the proof of  equation (8.1) in \cite{Chen-Wang-Yau3}.
Let $\mathcal C_r$ be the timelike cylinder in $\R^{3,1}$ obtained by translating $X(r)(\Sigma_r)$ along $T_0(r)$. Let $\hat \Sigma_r$ be the projection of $X(r)(\Sigma_r)$
onto the orthogonal complement of $T_0(r)$ and $\Omega_r$ be the portion of $\mathcal C_r$ between $X(r)(\Sigma_r)$ and $\hat \Sigma_r$.  Let $\gamma_{ij}$ and  $\Theta_{ij}$ be the metric and second fundamental form of $\mathcal C_r$ and  
$\pi_{ij}= \Theta_{ij} - tr _{\gamma} \Theta \gamma_{ij}$ be the conjugate momentum. Let $\nu$ be the outward unit normal of $\mathcal C_r$. From the expansion of $X(r)$, it is easy to see that 
\[  \langle K_{ij} , \nu \rangle = O(r^{-1}). \]
Let 
\[ K^{\mathcal C} =  K_{ij} -\langle K_{ij} , \nu \rangle \nu \]
be the tangential part of $K_{ij}$ to $\mathcal C_r$ and consider the vector field 
\[  Z= \pi( K^{\mathcal C}, \cdot). \]
We claim that $div_{\mathcal C} Z = O(r^{-3})$. Indeed, the conjugate momentum is divergence free since $\R^{3,1}$ is flat . We also have $K_{ij}$ is Killing in $\R^{3,1}$.  
As a result, we have 
\[  div_{\mathcal C} Z = \pi_{ij}  \Theta_{ij}  \langle K , \nu \rangle = O(r^{-3}). \]
Applying the divergence theorem to $Z$, we derive
\[ \int_{\Sigma_r } \pi (K^{\mathcal C}, n ) = \int_{\hat \Sigma_r}  \pi (K^{\mathcal C}, \hat n )  + \int_{\Omega_r}  div_{\mathcal C} Z  \]
where $n$ and $\hat n$ are the unit normal of $X(r)(\Sigma_r)$ and $\hat \Sigma_r$ in  $\mathcal C_r$.

By definition, $\int_{\Sigma_r } \pi (K^{\mathcal C}, n ) $ is the reference term in the angular momentum. Moreover, since $\hat \Sigma_r $ lies in a totally geodesics slice in $\R^{3,1}$
we have 
\[ \int_{\hat \Sigma_r}  \pi (K^{\mathcal C}, \hat n ) = 0.\]
Finally, since $ div_{\mathcal C} Z = (r^{-3})$, we have $\int_{\Omega_r}  div_{\mathcal C} Z = O(r^{-1})$.  It follows that the first integral in 
\eqref{angular} is of the order $O(r^{-1})$ as desired.

We proceed to study the second integral in \eqref{angular}.  We have
\[(K_{ij}^\top)_a =  r^2 (\tilde X^i \tilde \nabla_a \tilde X^j -\tilde X^j \tilde \nabla_a \tilde X^i)  + O(1)\]
and
\begin{equation*}
\begin{split}
    &  \frac{1}{8\pi} \int_{\Sigma_r}
K_{ij}^\top\cdot  \left(  \frac{\sqrt{(1+|\nabla \tau|^2)|H|^2 +(\Delta \tau)^2}\nabla \tau }{(1+|\nabla \tau|^2)} - \nabla[ \sinh^{-1} (\frac{\Delta \tau }{|H| \sqrt{1+|\nabla \tau|^2}})]-\alpha_{H}\right)d\Sigma_r\\
= & \frac{1}{8\pi}\int_{S^2} (\tilde X^i \tilde \nabla^a \tilde  X^j -\tilde X^j \tilde \nabla^a \tilde X^i)  \left (h^{(-2)}\tilde \nabla_a  (X^{(0)}_0  -  \sum_i b^i \tilde X^i ) - \frac{1}{2}\tilde  \nabla_a  (\tilde \Delta  X^{(0)}_0) -(\alpha_H^{(-2)})_a  \right ) dS^2+ O(r^{-1}).\\
\end{split}
\end{equation*}

Notice that \begin{equation}\label{cross_product} \tilde X^i \tilde \nabla^a \tilde X^j -\tilde X^j \tilde \nabla^a \tilde X^i=\tilde{\epsilon}^{ab}\epsilon^{ij}_{\,\,\,\,k} \tilde{\nabla}_b \tilde{X}^k\end{equation} is divergence 
free and thus
\[ \int_{S^2}  (\tilde X^i \tilde \nabla^a \tilde  X^j -\tilde X^j \tilde \nabla^a \tilde X^i) \tilde  \nabla_a  (\tilde \Delta  X^{(0)}_0) dS^2 =0. \]

We also claim that 
\begin{equation}\label{vanishing}\int_{S^2} ( \tilde X^i \tilde \nabla^a\tilde  X^j -\tilde X^j \tilde \nabla^a \tilde X^i)[h^{(-2)}\tilde \nabla_a  ( X^{(0)}_0  -  \sum_k b^k \tilde X^k) ]dS^2=0\end{equation}
Recall from \eqref{optimal_eq} that $ \tilde \Delta (\tilde \Delta +2)   (X^0)^{(0)} =-\tilde \Delta h^{(-2)}$, or  $(\tilde \Delta +2)   (X^0)^{(0)}$ and $-h^{(-2)}$
differ by a constant. 

In view of \eqref{cross_product}, \eqref{vanishing} is implied by 
\[ \int_{S^2} \tilde{\epsilon}^{ab}\tilde{\nabla}_b \tilde{X}^l  \tilde \nabla_a  (X^{(0)}_0-\sum_k b^k\tilde{X}^k) (\tilde \Delta +2)  X_0^{(0)}dS^2 =0\]
for $l=1,2,3$. 

Define the differential operator $L^l$ by $L^l(f)=\tilde{\epsilon}^{ab}\tilde{\nabla}_b \tilde{X}^l  \tilde \nabla_a f$ for any $C^1$ function on $S^2$. It suffices to show that 
\[\int_{S^2} L^l( X^{(0)}_0-\sum_k b^k \tilde{X}^k)(\tilde{\Delta}+2) X^{(0)}_0 dS^2=0.\]

$L^l$ corresponds to a rotational Killing field on $S^2$ and it is easy to check that the following properties hold for $L^l$: 
\[ L^l(fg)=L^l (f) g+ f L^l(g), [\tilde{\Delta}, L^l] f=0, \text{  and  } \int_{S^2} L^l(f) dS^2=0, \] for any $C^1$ functions $f$, $g$ on $S^2$. 

Integrating by parts and applying the above relations, we derive
\[\begin{split}& \int_{S^2} L^l( X^{(0)}_0-\sum_k b^k \tilde{X}^k)(\tilde{\Delta}+2) X^{(0)}_0 dS^2\\
&=\int_{S^2} L^l( (\tilde{\Delta}+2) (X^{(0)}_0-\sum_k b^k \tilde{X}^k)) X^{(0)}_0 dS^2\\
&=\int_{S^2} L^l( (\tilde{\Delta}+2) (X^{(0)}_0)) X^{(0)}_0 dS^2\\
&=\int_{S^2} L^l( \tilde{\Delta}X^{(0)}_0) X^{(0)}_0 dS^2.\end{split}\]

On the other hand, 
\[\int_{S^2} L^l( \tilde{\Delta}X^{(0)}_0) X^{(0)}_0 dS^2=\int_{S^2} \tilde{\Delta} (L^l( X^{(0)}_0)) X^{(0)}_0 dS^2
=\int_{S^2} (L^l( X^{(0)}_0)) \tilde{\Delta} X^{(0)}_0 dS^2.\]

Therefore,
\[\begin{split}&\int_{S^2} L^l( \tilde{\Delta}X^{(0)}_0) X^{(0)}_0 dS^2\\
&=\frac{1}{2}\int_{S^2} L^l( \tilde{\Delta}X^{(0)}_0) X^{(0)}_0 dS^2
+\frac{1}{2}\int_{S^2} (L^l( X^{(0)}_0)) \tilde{\Delta} X^{(0)}_0 dS^2\\
&=\frac{1}{2} \int_{S^2} L^l( X^{(0)}_0 \tilde{\Delta} X^{(0)}_0)=0\end{split}\]

This finishes the proof of the proposition.
\end{proof}
We need one more lemma before we evaluate the limit in terms of the data $(M,g,k)$.
\begin{lemma}\label{center_mass_2}
\[ \int_{S^2} \tilde X^i (h_0^{(-3)}-h^{(-3)}) dS^2 = -\int_{S^2} \tilde X^i \left[h^{(-3)}  +\frac{(h^{(-2)})^2}{4} \right]dS^2.\]
\end{lemma}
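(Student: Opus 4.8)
The identity is equivalent, after cancelling $\int_{S^2}\tilde X^i h^{(-3)}\,dS^2$ from both sides, to
\[
\int_{S^2}\tilde X^i\, h_0^{(-3)}\,dS^2 = -\frac14\int_{S^2}\tilde X^i\,(h^{(-2)})^2\,dS^2,
\]
so only the reference geometry is involved: $h_0^{(-3)}$ is the $r^{-3}$--coefficient of the mean curvature $|H_0|$ of the optimal isometric embedding $X(r)$ of $\Sigma_r$ into $\R^{3,1}$, and the whole proof amounts to computing this coefficient and then extracting its projection onto the degree--one spherical harmonics $\tilde X^i$.

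The plan is first to pin down $h_0^{(-3)}$. Using the expansion \eqref{eq_expansion_optimal} of $X(r)$, matching the isometric embedding identity $\sigma_{ab}=-\partial_a X^0\partial_b X^0+\sum_i\partial_a X^i\partial_b X^i$ at order $r^0$ writes $g^{(0)}_{ab}$ in terms of $(X^i)^{(-1)}$ and $(X^0)^{(0)}$ (trace--freeness of $g^{(0)}$ being reflected in $\sum_i\tilde\nabla\tilde X^i\cdot\tilde\nabla(X^i)^{(-1)}=\tfrac12|\tilde\nabla(X^0)^{(0)}|^2$), which lets one eliminate $(X^i)^{(-1)}$ throughout. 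Then, with a unit timelike normal $e_4$ near $\partial/\partial x^0$ and a unit spacelike normal $e_3$ near the Euclidean radial direction, a direct computation in the spirit of Sections~3--4 of \cite{Chen-Wang-Yau1} (which, as noted in the Remark after Theorem~\ref{limit_vector}, carry over to the present setting) and of \cite{Chen-Wang-Yau2} gives
\[
\langle H_0,e_4\rangle=\frac{\tilde\Delta (X^0)^{(0)}}{r^2}+O(r^{-3}),\qquad -\langle H_0,e_3\rangle=\frac2r+\frac{b}{r^3}+O(r^{-4}),
\]
where the $r^{-2}$--coefficient of $\langle H_0,e_3\rangle$ vanishes precisely because $g^{(0)}_{ab}$ is trace--free, and $b$ is an explicit functional of $g^{(0)}_{ab}$, $g^{(-1)}_{ab}$ and $(X^0)^{(0)}$. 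From $|H_0|^2=\langle H_0,e_3\rangle^2-\langle H_0,e_4\rangle^2=\tfrac4{r^2}+\tfrac{4h_0^{(-3)}}{r^4}+O(r^{-5})$ one reads off $h_0^{(-3)}=b-\tfrac14\big(\tilde\Delta (X^0)^{(0)}\big)^2$, the last term being the contribution of the time component. (Alternatively one may relate $|H_0|$ to the mean curvature of the Euclidean projection $\widehat\Sigma_r$ through the Wang--Yau identity and invoke the standard expansion of the mean curvature of a convex surface in $\R^3$ in terms of its induced metric.)

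The final step is a spherical--harmonic reduction. By \eqref{optimal_eq} and $\tilde\Delta\tilde X^i=-2\tilde X^i$ one has $(\tilde\Delta+2)(X^0)^{(0)}=-h^{(-2)}+c$ for a constant $c$, which drops out of any integral paired with $\tilde X^i$; moreover $\int_{S^2}\tilde X^i h^{(-2)}\,dS^2=0$ because the foliation has vanishing linear momentum. Substituting $\tilde\Delta (X^0)^{(0)}=-2(X^0)^{(0)}-h^{(-2)}+c$ into $h_0^{(-3)}$, integrating against $\tilde X^i$, and integrating by parts repeatedly on $S^2$ --- using the identities for the rotational operators $L^l$ already recorded in the previous proof, $L^l(fg)=L^l(f)g+fL^l(g)$, $[\tilde\Delta,L^l]=0$, $\int_{S^2}L^l(f)\,dS^2=0$, together with $\int_{S^2}\tilde X^i h^{(-2)}\,dS^2=0$ --- one checks that every term linear in $g^{(0)},g^{(-1)}$ and every quadratic term in $(X^0)^{(0)}$ except the one producing $-\tfrac14(h^{(-2)})^2$ integrates to zero. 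This yields the asserted identity, and hence the lemma.

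The main obstacle is the middle step: the honest derivation of the expansion of $|H_0|$ --- in particular the coefficient $b$ and the verification that its degree--one projection, combined with the $-\tfrac14(\tilde\Delta (X^0)^{(0)})^2$ term, leaves only $-\tfrac14(h^{(-2)})^2$ --- is a lengthy but routine perturbative computation of the second fundamental forms of $X(r)(\Sigma_r)$ in $\R^{3,1}$, parallel to the optimal--embedding analysis of \cite{Chen-Wang-Yau2}. The remaining manipulations are bookkeeping with spherical harmonics.
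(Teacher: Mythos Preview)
Your reduction to showing $\int_{S^2}\tilde X^i h_0^{(-3)}\,dS^2=-\tfrac14\int_{S^2}\tilde X^i (h^{(-2)})^2\,dS^2$ is correct, and the final spherical--harmonic manipulation using $(\tilde\Delta+2)(X^0)^{(0)}=-h^{(-2)}+c$ is indeed how the paper finishes. But the heart of the argument is the step you label ``lengthy but routine,'' and there you have a genuine gap.

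The quantity $h_0^{(-3)}$ depends on $(X^i)^{(-1)}$ through \emph{second} derivatives: from $|H_0|^2=\sum_i(\Delta X^i)^2-(\Delta X^0)^2$ one obtains $h_0^{(-3)}=-\sum_k\tilde X^k\tilde\Delta(X^k)^{(-1)}-\tfrac14(\tilde\Delta(X^0)^{(0)})^2$. The linearized isometric embedding equation at order $r^0$ only pins down the symmetrized first derivative $\sum_i\tilde\nabla_a\tilde X^i\tilde\nabla_b(X^i)^{(-1)}+\tilde\nabla_b\tilde X^i\tilde\nabla_a(X^i)^{(-1)}$; it does \emph{not} let you ``eliminate $(X^i)^{(-1)}$ throughout,'' and in particular it does not by itself produce $\sum_k\tilde X^k\tilde\Delta(X^k)^{(-1)}$ as a functional of $g^{(0)}$, $g^{(-1)}$ and $(X^0)^{(0)}$ alone. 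Your alternative route via $\langle H_0,e_3\rangle$ has the same problem: the normal $e_3$ and the second fundamental form both involve the full embedding, so $(X^i)^{(-1)}$ still enters.

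The paper handles this with a specific device you do not mention: Nirenberg's ansatz $\tilde\nabla_a(X^j)^{(-1)}=P_a\tilde X^j+(\tfrac12 T_{ab}+F\epsilon_{ab})\tilde\sigma^{bc}\tilde X^j_c$, with $T_{ab}=\tilde\nabla_a(X^0)^{(0)}\tilde\nabla_b(X^0)^{(0)}+g^{(0)}_{ab}$. The compatibility condition $\epsilon^{ad}\tilde\nabla_d\tilde\nabla_a(X^j)^{(-1)}=0$ then gives algebraic/first--order relations for $P_a$ and $F$, and one computes $\int_{S^2}\tilde X^i\tilde\nabla^aP_a\,dS^2=-\tfrac12\int_{S^2}\tilde X^i\,\tilde\sigma^{ab}T_{ab}\,dS^2$. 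This yields $-\int_{S^2}\tilde X^i\sum_k\tilde X^k\tilde\Delta(X^k)^{(-1)}\,dS^2=\int_{S^2}\tilde X^i|\tilde\nabla(X^0)^{(0)}|^2\,dS^2$, after which the identity $|\tilde\nabla f|^2-\tfrac14(\tilde\Delta f)^2+\tfrac14((\tilde\Delta+2)f)^2=f^2+\tfrac12\tilde\Delta(f^2)$ (with $f=(X^0)^{(0)}$) gives the vanishing against $\tilde X^i$. Without this ansatz---or some equivalent mechanism for controlling the part of $(X^i)^{(-1)}$ not determined by the symmetrized gradient---your claim that $b$ is an explicit functional of the data and $(X^0)^{(0)}$ is unsubstantiated, and the proof does not close.
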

\begin{proof}
It suffices to prove \[\int_{S^2} \tilde X^i \left[h_0^{(-3)}+\frac{(h^{(-2)})^2}{4}\right] dS^2=0.\]
Recall that 
\[  |H_0|^2 =  \sum_i (\Delta X^i)^2 - (\Delta X^0)^2 \]
where
\begin{equation}
\begin{split}
\Delta X^0 = & \frac{ \tilde \Delta X^{(0)}_{0}}{r^2} + O(r^{-3})\\
\Delta X^i =  & r \Delta \tilde X^i +  \frac{\tilde \Delta (X^{i})^{(-1)} }{r^3}  + O(r^{-4})
\end{split}
\end{equation}
Furthermore, let $\Gamma_{ab}^c$ be the Christoffel symbols of the metric $\sigma_{ab}$. We have
\[ \Gamma_{ab}^c = \tilde   \Gamma_{ab}^c + \frac{( \Gamma^{(-2)})_{ab}^c}{r^2} + O(r^{-3}).\] 
This gives the following expansion for $\Delta \tilde X^i$
\[
\begin{split} 
\Delta \tilde X^i
= &  \frac{-2 \tilde X^i}{r^2}  +  \frac{ \tilde \sigma^{ab} g_{ab}^{(0)} \tilde X^i + \tilde \sigma^{ab}( \Gamma^{(-2)})_{ab}^c \tilde X^i_c  }{r^4} + O(r^{-5})  \\ 
=  &  \frac{-2 \tilde X^i}{r^2}  +  \frac{ \tilde \sigma^{ab} ( \Gamma^{(-2)})_{ab}^c \tilde X^i_c }{r^4} + O(r^{-5}) .
\end{split}
 \]
As a result, we have
\[ h_{0}^{(-3)} =  - \sum_k \tilde X^k  \tilde \Delta (X^{k})^{(-1)}  -\frac{1}{4}  (\tilde \Delta X^{(0)}_{0})^2. \]
Recall that  $({X}^i)^{(-1)}$ is the solution of the linearized isometric embedding equation
\begin{equation}\sum_{i=1}^3 \tilde \nabla _a \tilde{X}^i  \tilde \nabla _b ({X}^i)^{(-1)} + \tilde \nabla _b \tilde{X}^i  \tilde \nabla _a ({X}^i)^{(-1)} =  \tilde \nabla _a ({X}^0)^{(0)}  \tilde \nabla _b ({X}^0)^{(0)} + g^{(0)}_{ab} .\end{equation} 
Let $T_{ab}= \tilde \nabla _a ({X}^0)^{(0)}  \tilde \nabla _b ({X}^0)^{(0)}+ g^{(0)}_{ab}$. We use the ansatz in Nirenberg's  paper \cite{Nirenberg} for solving the isometric embedding of surface with positive Gauss curvature into $\R^3$. Write
\[ \tilde \nabla_a ({X}^j)^{(-1)}= P_a \tilde X^j + (\frac{T_{ab}}{2} + F \epsilon_{ab}) \tilde \sigma^{bc} \tilde X^j_c  \]
where $P_a$ is a vector field,  $F$ is a function, and $ \epsilon_{ab}$ is the area form on $S^2$.  $P_a$ and $F$ as considered to be the new unknowns that satisfy the compatibility condition. We have
\begin{equation} \label{Nirenberg_equ}
 \tilde \nabla_d \tilde \nabla_a X^{(-1)} = (\tilde \nabla_d P_a) \tilde X  + P_a \tilde X_d +  (\frac{\tilde \nabla _d T_{ab}}{2} + \tilde \nabla_d F \epsilon_{ab}) \tilde \sigma^{bc} \tilde X_c -  (\frac{T_{ad}}{2} + F \epsilon_{ad}) \tilde X .
\end{equation}
Taking the trace of equation (\ref{Nirenberg_equ}) and multiplying by $-\tilde{X}$,
\[\sum_k  - \tilde X^ k \tilde \Delta  ({X}^k)^{(-1)} = - \tilde \nabla^a P_a + \frac{1}{2}\tilde \sigma^{ab}T_{ab}=- \tilde \nabla^a P_a +\frac{1}{2}|\tilde \nabla  ({X}^0)^{(0)} |^2.\]

Multiplying by $\tilde{X}^i$ and integrating, we obtain
\begin{equation}\label{integral_dth_0}-\int \tilde{X}^i  \sum_k \tilde X^k  \tilde \Delta (X^{k})^{(-1)}dS^2 =\frac{1}{2}\int \tilde{X}^i|\tilde \nabla  ({X}^0)^{(0)} |^2 -\int \tilde{X}^i\tilde{\nabla}^a P_a dS^2.\end{equation}
We  use the compatibility condition, $\epsilon^{ad}\tilde \nabla_d \tilde \nabla_a X^{(-1)} =0 $ and \eqref{Nirenberg_equ}, to obtain equations for $P_a$ and $F$:
\[
\begin{split}
\epsilon^{ad}( \tilde \nabla_d P_a  - \frac{T_{ad}}{2} - F \epsilon_{ad})& =0,\\
\epsilon^{ad}(P_a \tilde \sigma_{cd} +\frac{1}{2}\tilde \nabla _dT_{ac} +\tilde \nabla _d  F \epsilon_{ac}) & =0.
\end{split}
\]
$ P_a$ can be solved  from the second equation in terms of $F$ and $T_{ab}$. The equation can be written as 
\begin{equation} \label{equ_P}
\epsilon_{ac}P_b \tilde \sigma^{ab} +\frac{1}{2} \epsilon^{ad} \tilde\nabla _dT_{ac}+ \tilde \nabla _c F   =0.
\end{equation}
Multiplying  the equation by $\epsilon^{ce}$, we have
\[ P^e = \frac{1}{2} \epsilon^{ce} \epsilon ^{ad} \tilde\nabla _dT_{ac} + \epsilon^{ce} \tilde \nabla _c F.    \]
Taking the divergence and integrating against $\tilde{X}^i$,
\[
\begin{split}
\int_{S^2} \tilde X^i \tilde \nabla ^e P_e  dS^2 &= \int_{S^2} \tilde X_i \tilde \nabla _e(\frac{1}{2} \epsilon^{ce} \epsilon ^{ad} \tilde\nabla _dT_{ac} + \epsilon^{ce} \tilde \nabla _c F) dS^2 \\
& =  \frac{1}{2}\int_{S^2}\tilde \nabla _e \tilde\nabla _d \tilde X^i  \epsilon^{ce} \epsilon ^{ad}T_{ac}dS^2\\
& =  -\frac{1}{2} \int_{S^2} \tilde X^i \tilde \sigma_{de}  \epsilon^{ce} \epsilon ^{ad}T_{ac}dS^2\\
& = - \frac{1}{2} \int_{S^2} \tilde X^i \tilde \sigma^{ab}T_{ab} dS^2.
\end{split}
\]
As a result, we have
\[
-\int_{S^2} \tilde X^i   \sum_k \tilde X^k  \tilde \Delta (X^{k})^{(-1)} = \int_{S^2} \tilde X^i  |\tilde \nabla (X^0)^{(0)}  |^2
\]
and
\begin{equation}
\begin{split}
   & \int_{S^2} \tilde X^i \left (  h_0^{(-3)}+\frac{(h^{(-2)})^2}{4}\right) dS^2  \\
= &  \int_{S^2} \tilde X^i  \left ( |\tilde \nabla  (X^0)^{(0)} |^2  -\frac{1}{4}  (\tilde \Delta X^{(0)}_{0})^2  +\frac{(h^{(-2)})^2}{4}\right)dS^2  \\
= &  \int_{S^2} \tilde X^i  \left (|\tilde \nabla  (X^0)^{(0)}  |^2  -\frac{1}{4}  (\tilde \Delta  (X^0)^{(0)})^2  +\frac{[(\tilde \Delta + 2)  (X^0)^{(0)} ]^2}{4} \right)dS^2  \\
= &  \int_{S^2} \tilde X^i  \left (|\tilde \nabla  (X^0)^{(0)}  |^2    + (  (X^0)^{(0)}) ^2 +  X^{(0)}_{0} \tilde  \Delta  (X^0)^{(0)}\right) dS^2 \\
= &  \int_{S^2} \tilde X^i  \left ( (  (X^0)^{(0)}) ^2 + \frac{\tilde \Delta}{2}  ( (X^0)^{(0)})^2 \right) dS^2 =   0.
\end{split}
\end{equation}
\end{proof}
We are now ready to express the total angular momentum and center of mass in terms of the expansion of the metric $g$ and second fundamental form $k$. The calculation is straight-forward and is similar to Lemma 3.1. We simply need to expand one more order. 
\begin{theorem}\label{thm_conserved}
Given an asymptotically hyperbolic initial data set $(M,g,k)$, for the foliation with vanishing linear momentum, the center of mass $C^i$ and angular momentum $Ji$ of $(M,g,k)$ are 
\begin{equation}
\begin{split}
C^i =& \frac{1}{8 \pi} \int_{S^2} \tilde X^i (2 tr_{S^2} g_{ab}^{(-2)} +g_{rr}^{(-6)}+ \tilde \nabla^a g_{ra}^{(-3)} +  tr_{S^2} p_{ab}^{(-2)}) dS^2 \\
J^i =& \frac{1}{8 \pi} \int_{S^2} \tilde X^i  \tilde\epsilon^{ab}  \tilde \nabla_b (g_{ra}^{(-3)}+p_{ra}^{(-3)}) dS^2 .
\end{split}
\end{equation}
\end{theorem}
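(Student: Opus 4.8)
The plan is to reduce the theorem to the two results immediately preceding it — the Proposition that writes $C^i$ and $J^i$ in terms of the expansion coefficients $h^{(-2)}$, $h^{(-3)}$ and $(\alpha_H^{(-2)})_a$ of the mean curvature gauge data, and Lemma \ref{center_mass_2} — and then to push the asymptotic analysis of Lemma \ref{data_expansion} one order further so as to read off $h^{(-3)}$, $(h^{(-2)})^2$ and $(\alpha_H^{(-2)})_a$ from the coefficients of $g$ and $k$.

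For the center of mass, the Proposition together with Lemma \ref{center_mass_2} give $C^i=-\frac1{8\pi}\int_{S^2}\tilde X^i\bigl(h^{(-3)}+\tfrac14(h^{(-2)})^2\bigr)\,dS^2$, and since $|H|=\frac2r+\frac{h^{(-2)}}{r^2}+\frac{h^{(-3)}}{r^3}+O(r^{-4})$ we have $h^{(-3)}+\tfrac14(h^{(-2)})^2=\tfrac14\,[\,|H|^2\,]^{(-4)}$, the coefficient of $r^{-4}$ in $|H|^2=\langle H,e_3\rangle^2-\langle H,e_4\rangle^2$. So I would compute $\langle H,e_3\rangle$ and $\langle H,e_4\rangle$ to order $r^{-4}$. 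For $\langle H,e_4\rangle=-\sigma^{ab}(\sigma_{ab}+p_{ab})$ this needs only the expansion of $\sigma^{ab}$ to order $r^{-6}$, using $\sigma^{ab}\sigma_{ab}\equiv2$ and $tr_{S^2}p^{(0)}_{ab}=0$. For $\langle H,e_3\rangle=-\sigma^{ab}\langle\nabla_{\partial_a}e_3,\partial_b\rangle$ I would continue the computation in \eqref{second_ff}, writing $e_3=N^{-1}(\partial_r+V^a\partial_a)$ with $N=\sqrt{g_{rr}-|V|^2_\sigma}$, $V_a=-g_{ra}$, and expanding $N$, $V^a$ and the Christoffel symbols of $g$ to one further order; the $\partial_a$-component of $e_3$, which is of order $r^{-4}$, is what produces the $\tilde\nabla^a g^{(-3)}_{ra}$ term, while the remaining contributions produce $g^{(-2)}_{ab}$, $g^{(-6)}_{rr}$ and quadratic expressions in $g^{(0)}$ and $p^{(0)}$. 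After extracting $[\,|H|^2\,]^{(-4)}$ and integrating against $\tilde X^i$, the constant terms drop (they are orthogonal to $\tilde X^i$), $tr_{S^2}g^{(0)}_{ab}=tr_{S^2}p^{(0)}_{ab}=0$ remove several terms, and any quadratic-in-$(g^{(0)},p^{(0)})$ scalars that occur integrate to zero against $\tilde X^i$; what is left is exactly the stated formula for $C^i$.

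For the angular momentum I would use the identity $(\alpha_H)_a=-k(e_3,\partial_a)+\nabla_a\theta$ with $\sinh\theta=-\langle H,e_4\rangle/|H|$, and again carry the expansion of Lemma \ref{data_expansion} one order further. Since $\theta$ is a scalar, $\nabla_a\theta^{(-2)}$ is exact and hence annihilated by $\tilde\epsilon^{ab}\tilde\nabla_b$; carrying out the expansion of $k(e_3,\partial_a)$ and of $\theta$ to the relevant order, one finds that $(\alpha_H^{(-2)})_a$ differs from $-(g^{(-3)}_{ra}+p^{(-3)}_{ra})$ by an exact one-form on $S^2$. Substituting into the Proposition's formula $J^i=-\frac1{8\pi}\int_{S^2}\tilde X^i\,\tilde\epsilon^{ab}\tilde\nabla_b(\alpha_H^{(-2)})_a\,dS^2$ and discarding the exact part yields the stated $J^i$.

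The proof is entirely computational, so the main obstacle is the bookkeeping of the ``one more order'' expansion: correctly collecting the cross terms coming from $g_{ra}$ and from products of the subleading coefficients of $g$, and then recognizing which of the resulting terms survive integration against the $\ell=1$ functions $\tilde X^i$. The single non-mechanical point is the vanishing of the $\ell=1$ part of the quadratic-in-$(g^{(0)},p^{(0)})$ scalars that appear in $[\,|H|^2\,]^{(-4)}$, which is in the spirit of the vanishing established in the proof of Lemma \ref{center_mass_2}; everything else is a direct continuation of the calculations there and in Lemma \ref{data_expansion}.
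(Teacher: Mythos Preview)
Your plan is exactly the paper's: invoke the preceding Proposition and Lemma \ref{center_mass_2}, then carry the expansions of $\langle H,e_3\rangle$, $\langle H,e_4\rangle$ and $(\alpha_H)_a=-k(e_3,\partial_a)+\nabla_a\theta$ one order further, with the exact piece $\nabla_a\theta$ dropping out of $J^i$ under $\tilde\epsilon^{ab}\tilde\nabla_b$, and with $k(e_3,\partial_a)=\frac{g^{(-3)}_{ra}+p^{(-3)}_{ra}}{r^2}+O(r^{-3})$ supplying the angular momentum integrand.

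One remark on the point you flag as the ``single non-mechanical'' step. The paper's computation simply does not record any quadratic-in-$(g^{(0)},p^{(0)})$ contributions at order $r^{-4}$; for instance its expansion reads $-\langle H,e_4\rangle=2+\frac{tr_{S^2}p^{(-1)}}{r^3}+\frac{tr_{S^2}p^{(-2)}}{r^4}+O(r^{-5})$, and no $\ell=1$ vanishing argument is ever invoked. Your assertion that such quadratic scalars have vanishing $\ell=1$ part is not true for generic traceless tensors $g^{(0)},p^{(0)}$ on $S^2$, and Lemma \ref{center_mass_2} does not help here---that lemma concerns the reference side and the function $(X^0)^{(0)}$, which is determined by $h^{(-2)}$ via \eqref{optimal_eq}, not the data $g^{(0)},p^{(0)}$. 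So while your overall strategy matches the paper exactly, the justification you propose for disposing of these terms is not the right one; the paper handles them by direct (if terse) computation rather than by any orthogonality principle.
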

\begin{proof}
We start with the angular momentum. Recall that 
\[ 
J^i   = \frac{-1}{8 \pi} \int_{S^2} \tilde X^i   \epsilon^{ab}  \tilde \nabla_b (\alpha_H^{(-2)})_a  dS^2.
 \]
where
\[ 
\begin{split}
 ( \alpha_H)_a = & - k(e_3, \partial_a) + \nabla_a \theta \\
 k(e_3, \partial_a)        =& \frac{g_{ra}^{(-3)}+p_{ra}^{(-3)}}{r^2} + O(r^{-3}) . 
\end{split}
\]
Hence,
\[ 
\begin{split}
  J^i  
 =&\frac{1}{8 \pi} \int_{S^2} \tilde X^i  \tilde\epsilon^{ab} \left [ \tilde \nabla_b (g_{ra}^{(-3)}+p_{ra}^{(-3)})   + \tilde \nabla_b \tilde \nabla _a \theta \right]dS^2 .\\
=  &\frac{1}{8 \pi} \int_{S^2} \tilde X^i  \tilde\epsilon^{ab}  \tilde \nabla_b (g_{ra}^{(-3)}+p_{ra}^{(-3)}) dS^2 .
\end{split}
\]
For the center of mass, we start with $\langle H, e_4 \rangle$. 
\[  
\begin{split}
   -\langle H, e_4 \rangle
= &   \sigma^{ab}( g_{ab}  + p_{ab} ) \\
=& 2 + \frac{ tr_{S^2} p^{(-1)}_{ab} }{r^3}+ \frac{ tr_{S^2} p^{(-2)}_{ab} }{r^4} + O(r^{-5}).
\end{split}
\]
On the other hand, we compute
\[   e_3 =  \frac{1}{\sqrt{ g_{rr} -|V|_{\sigma}^2 }} (\frac{\partial}{\partial r}  + V^a\frac{\partial}{\partial u^a}  )   \] 
where  $V_a = -g_{ra}$. The second fundamental form of $\Sigma_r$ in the direction of $e_3$ is 
\[  
\begin{split}
        \langle \nabla_{\frac{\partial}{\partial u^a}  }   e_3 , \frac{\partial}{\partial u^b}  \rangle = & \sqrt{r^2+1 - \frac{ g^{(-5)}_{rr}}{r}- \frac{ g^{(-6)}_{rr}}{r^2}  }  \langle \nabla_{\frac{\partial}{\partial u^a}  }  \frac{\partial}{\partial r} + V^c \frac{\partial}{\partial u^c}, \frac{\partial}{\partial u^b}  \rangle  + O(r^{-3}) \\
=&  \sqrt{r^2+1 - \frac{ g^{(-5)}_{rr}}{r}- \frac{ g^{(-6)}_{rr}}{r^2} } \left[r \tilde \sigma_{ab} - \frac{g^{(-1)}_{ab}}{2 r^2}- \frac{(g^{(-2)}_{ab}+ \tilde \nabla_a  g^{(-3)}_{br} )  }{r^3} \right]+ O(r^{-3})   .
\end{split}
\]
Hence, 
\[  
\begin{split}
    &   - \langle H, e_3 \rangle   \\
= &  \sigma^{ab}   \sqrt{r^2+1 - \frac{ g^{(-5)}_{rr}}{r}- \frac{ g^{(-6)}_{rr}}{r^2}} \left[r \tilde \sigma_{ab} - \frac{g^{(-1)}_{ab}}{2 r^2}- \frac{(g^{(-2)}_{ab}+ \tilde \nabla_a  g^{(-3)}_{br})}{ r^3}\right]  + O(r^{-5})  \\
= &  r(1+ \frac{1}{2r^2} - \frac{ g^{(-5)}_{rr}}{2r^3} - \frac{g^{(-6)}_{rr}+ \frac{1}{4}}{2r^3})(r \tilde \sigma_{ab} - \frac{g^{(-1)}_{ab}}{2 r^2}- \frac{g^{(-2)}_{ab}+ \tilde \nabla_a  g^{(-3)}_{br}}{ r^3}) (\frac{ \tilde \sigma^{ab}}{r^2} - \frac{g^{(-1)})^{ab}}{r^5}- \frac{(g^{(-2)})^{ab}}{r^6} ) + O(r^{-5}) \\
= & 2+ \frac{1}{r^2} - \frac{ g^{(-5)}_{rr} + \frac{3}{2} tr_{S^2} g^{(-1)}_{ab} }{r^3} - \frac{g^{(-6)}_{rr}+\frac{1}{4} + 2 tr_{S^2} g^{(-1)}_{ab}+ \tilde \nabla^a  g^{(-3)}_{ar}}{r^4}+ O(r^{-5}).
\end{split}
\]
We have
\[  
\begin{split}
|H|^2 = &   \langle H, e_3 \rangle ^2 - \langle H, e_4 \rangle ^2 \\ 
=& \frac{4}{r^2} + \frac{4h^{(-2)}}{r^3}-\frac{4(g^{(-6)}_{rr} + 2 tr_{S^2} g^{(-1)}_{ab}+ \tilde \nabla^a  g^{(-3)}_{ar}+ tr_{S^2} p^{(-2)}_{ab})}{r^4}  +O(r^{-5}) .
\end{split}
\]
Recall 
\[ |H| = \frac{2}{r} + \frac{h^{(-2)}}{r^2}+ \frac{h^{(-3)}}{r^3} + O(r^{-4}). \]
Matching the coefficients, 
\[  h^{(-3)}  +\frac{(h^{(-2)})^2}{4}   = -(g^{(-6)}_{rr} + 2 tr_{S^2} g^{(-1)}_{ab}+ \tilde \nabla^a  g^{(-3)}_{ar}+ tr_{S^2} p^{(-2)}_{ab}).  \]
The formula for the center of mass follows from Lemma \ref{center_mass_2}.
\end{proof}


\begin{thebibliography}{99}  

\bibitem{Andersson-Cai-Galloway}
L. Andersson, M. Cai\ and\ G. J. Galloway, \textit{Rigidity and positivity of mass for asymptotically hyperbolic manifolds}, Ann. Henri Poincar\'e {\bf 9} (2008), no.~1, 1--33.
\bibitem{Arnowitt-Deser-Misner} R. Arnowitt, S. Deser\ and\ C. W. Misner, The dynamics of general relativity, in {\it Gravitation: An introduction to current research}, 227--265, Wiley, New York.
\bibitem{Ashtekar-Hansen} A. Ashtekar and R. O. Hansen, \textit{A Unified Treatment of Spatial and Null Infinity in General Relativity: Universal Structure, Asymptotic Symmetries and conserved Quantities at Spatial Infinity},  J. Math. Phys, {\bf 19} (1978) , 1542-1566.
\bibitem{Bartnik} R. Bartnik, \textit{The mass of an symptomatically flat manifold,} Commun. Pure Appl. Math. 39, 661--693 (1986).
\bibitem{Beig-Omurchadha} R. Beig\ and\ N. \'O Murchadha, \textit{The Poincar\'e group as the symmetry group of canonical general relativity}, Ann. Physics {\bf 174} (1987), no.~2, 463--498. 
\bibitem{Bondi-Burg-Metzner} H. Bondi, M. G. J. van der Burg and A. W. K. Metzner.  \textit{ Gravitational Waves in General Relativity. VII. Waves from Axi-Symmetric Isolated Systems}, Proc. Roy. Soc. A. \textbf{269} (1962) 21--52
\bibitem{Chen-Wang-Yau1} P.-N. Chen, M.-T. Wang, and S.-T. Yau, \textit {Evaluating quasilocal energy and solving optimal embedding equation at null infinity,} Comm. Math. Phys. \textbf{308} (2011), no.3, 845--863
\bibitem{Chen-Wang-Yau2}  P.-N. Chen, M.-T. Wang, and S.-T. Yau, \textit{Minimizing properties of critical points of quasi-local energy}, Comm. Math. Phys. \textbf{329} (2014), no.3, 919--935
\bibitem{Chen-Wang-Yau3}  P.-N. Chen, M.-T. Wang, and S.-T. Yau, \textit {Conserved quantities in general relativity: from the quasi-local level to spatial infinity}, arXiv:1312.0985
\bibitem{Christodoulou2} D. Christodoulou, \textit{Nonlinear nature of gravitation and gravitational-wave experiments}, Phys. Rev. Lett. {\bf 67} (1991), no.~12, 1486--1489. 
\bibitem{Christodoulou}  D. Christodoulou, {\it Mathematical problems of general relativity. I}, Zurich Lectures in Advanced Mathematics, European Mathematical Society (EMS), Z\"urich, 2008. 
\bibitem{sta} D. Christodoulou and S. Klainerman. \textit{The global nonlinear stability of the Minkowski space}, Princeton Math. Series \textbf{41}. Princeton University Press. Princeton. NJ. (1993). 
\bibitem{Chrusciel} P. T. Chru\'sciel, \textit{On the invariant mass conjecture in general relativity}, Comm. Math. Phys. {\bf 120} (1988), no.~2, 233--248.
\bibitem{Chrusciel-Herzlich} P. T. Chru\'sciel\ and\ M. Herzlich, \textit{The mass of asymptotically hyperbolic Riemannian manifolds}, Pacific J. Math. {\bf 212} (2003), no.~2, 231--264. 

\bibitem{Chrusciel-Jezuerski}P. T. Chru\'sciel, J. Jezierski\ and\ S. \L\c eski, \textit{The Trautman-Bondi mass of hyperboloidal initial data sets}, Adv. Theor. Math. Phys. {\bf 8} (2004), no.~1, 83--139.
\bibitem{Fan-Shi-Tam} X.-Q. Fan, Y. Shi\ and\ L.-F. Tam, \textit{Large-sphere and small-sphere limits of the Brown-York mass}, Comm. Anal. Geom. {\bf 17} (2009), no.~1, 37--72. 
\bibitem{Huang}L.-H. Huang, \textit{On the center of mass of isolated systems with general asymptotics}, Classical Quantum Gravity {\bf 26} (2009), no.~1, 015012, 25 pp.
\bibitem{Huisken-Yau} G. Huisken\ and\ S.-T. Yau, \textit{Definition of center of mass for isolated physical systems and unique foliations by stable spheres with constant mean curvature}, Invent. Math. {\bf 124} (1996), no.~1-3, 281--311. 
\bibitem{Kwong-Tam} K.-K. Kwong\ and\ L.-F. Tam, \textit{Limit of quasilocal mass integrals in asymptotically hyperbolic manifolds}, Proc. Amer. Math. Soc. {\bf 141} (2013), no.~1, 313--324.
\bibitem{Liu-Yau} C.-C. M. Liu\ and\ S.-T. Yau, Positivity of quasi-local mass. II, J. Amer. Math. Soc. {\bf 19} (2006), no.~1, 181--204
\bibitem{Min-Oo}   M. Min-Oo, \textit{Scalar curvature rigidity of asymptotically hyperbolic spin manifolds}, Math. Ann. {\bf 285} (1989), no.~4, 527--539. 
\bibitem{Nirenberg} L. Nirenberg, \textit{The Weyl and Minkowski problems in differential geometry in the large}, Comm. Pure Appl. Math. {\bf 6} (1953), 337--394.
\bibitem{Regge-Teitelboim}T. Regge\ and\ C. Teitelboim, \textit{Role of surface integrals in the Hamiltonian formulation of general relativity}, Ann. Physics {\bf 88} (1974), 286--318
\bibitem{Rizzi} A. Rizzi, \textit{Angular momentum in general relativity: a new definition}, Phys. Rev. Lett. {\bf 81} (1998), no.~6, 1150--1153. 
\bibitem{Sachs} R. K. Sachs, \textit{Gravitational waves in general relativity. VIII. Waves in asymptotically flat space-time. Proc.} Roy. Soc. Ser. A \textbf{270} (1962) 103--126.
\bibitem{Schoen-Yau} R. Schoen and S.-T. Yau, \textit{Proof of the positive mass theorem. II.}  Comm. Math. Phys. \textbf{79} (1981), no. 2, 231--260.
\bibitem{Schoen-Yau2}R. Schoen\ and\ S. T. Yau, \textit{Proof that the Bondi mass is positive}, Phys. Rev. Lett. {\bf 48} (1982), no.~6, 369--371.
\bibitem{Sakovich} A. Sakovich, \textit{A study of asymptotically hyperbolic manifolds in mathematical relativity}, Ph. D. Thesis, (2012), KTH,  Stockholm,  Sweden.
\bibitem{Trautman} A. Trautman, \textit{Radiation and boundary conditions in the theory of gravitation}, Bull. Acad. Polon. Sci. S\'er. Sci. Math. Astr. Phys. {\bf 6} (1958), 407--412. 
\bibitem{Wang-Yau1} M.-T. Wang\ and\ S.-T. Yau, \textit{A generalization of Liu-Yau's quasi-local mass}, Comm. Anal. Geom. {\bf 15} (2007), no.~2, 249--282.
\bibitem{Wang-Yau2} M.-T. Wang and S.-T. Yau, \textit{Isometric embeddings into the Minkowski space and new quasi-local mass.} Comm. Math. Phys. \textbf{288} (2009), no. 3, 919-942.
\bibitem{Wang-Yau3} M.-T. Wang and S.-T. Yau, \textit{Limit of quasilocal mass at spatial infinity.} Comm. Math. Phys. \textbf{296} (2010), no.1, 271-283. arXiv:0906.0200v2.
\bibitem{Wang} X. Wang, \textit{The mass of asymptotically hyperbolic manifolds}, J. Differential Geom. {\bf 57} (2001), no.~2, 273--299.
\bibitem{Witten}E. Witten, \textit{A new proof of the positive energy theorem}, Comm. Math. Phys. {\bf 80} (1981), no.~3, 381--402.
\bibitem{Zhang}X. Zhang, \textit{Strongly asymptotically hyperbolic Spin manifolds}, Math. Res. Lett. {\bf 7} (2000), no.~5-6, 719--727. 
\bibitem{Zhang2}X. Zhang, \textit{A definition of total energy-momenta and the positive mass theorem on asymptotically hyperbolic 3-manifolds. I}, Comm. Math. Phys. {\bf 249} (2004), no.~3, 529--548.
\bibitem{Zipser}N. Zipser, Part II: \textit{Solutions of the Einstein-Maxwell equations}, in {\it Extensions of the stability theorem of the Minkowski space in general relativity}, 297--491, AMS/IP Stud. Adv. Math., 45, Amer. Math. Soc., Providence, RI.
\end{thebibliography}
\end{document}